\let\origsection=\section \def\section{\@ifstar{\origsection*}{\mysection}}
\def\mysection{\@startsection{section}{1}\z@{.7\linespacing\@plus\linespacing}{.5\linespacing}{\normalfont\scshape\centering\S}}
\renewcommand{\PrintDOI}[1]{\doi{#1}}
\numberwithin{equation}{section}
\numberwithin{figure}{section}
\let\polishlcross=\l
\def\l{\ifmmode\ell\else\polishlcross\fi}
\let\emptyset=\varnothing
\let\setminus=\smallsetminus
\def\moverlay{\mathpalette\mov@rlay}
\def\mov@rlay#1#2{\leavevmode\vtop{   \baselineskip\z@skip \lineskiplimit-\maxdimen
   \ialign{\hfil$\m@th#1##$\hfil\cr#2\crcr}}}
\newcommand{\charfusion}[3][\mathord]{
    #1{\ifx#1\mathop\vphantom{#2}\fi
        \mathpalette\mov@rlay{#2\cr#3}
      }
    \ifx#1\mathop\expandafter\displaylimits\fi}
\newcommand{\dcup}{\charfusion[\mathbin]{\cup}{\cdot}}
\DeclareFontFamily{U}  {MnSymbolC}{}
\DeclareSymbolFont{MnSyC}         {U}  {MnSymbolC}{m}{n}
\DeclareFontShape{U}{MnSymbolC}{m}{n}{
    <-6>  MnSymbolC5
   <6-7>  MnSymbolC6
   <7-8>  MnSymbolC7
   <8-9>  MnSymbolC8
   <9-10> MnSymbolC9
  <10-12> MnSymbolC10
  <12->   MnSymbolC12}{}
\DeclareMathSymbol{\powerset}{\mathord}{MnSyC}{180}
\newcommand{\qedge}[7]{

	\ifx\relax#4\relax
		\def\qoffs{0pt}
	\else
		\def\qoffs{#4}
	\fi

	\def\qhedge{
		($#1+#3!\qoffs!-90:#2-#3$) --
		($#2+#1!\qoffs!-90:#3-#1$) --
		($#3+#2!\qoffs!-90:#1-#2$) -- cycle}

	\coordinate (12) at ($#1!\qoffs!90:#2$);
	\coordinate (13) at ($#1!\qoffs!-90:#3$);
	\coordinate (23) at ($#2!\qoffs!90:#3$);
	\coordinate (21) at ($#2!\qoffs!-90:#1$);
	\coordinate (31) at ($#3!\qoffs!90:#1$);
	\coordinate (32) at ($#3!\qoffs!-90:#2$);
	
	\def\nqhedge{
		(13) let \p1=($(13)-#1$), \p2=($(12)-#1$) in
			arc[start angle={atan2(\y1,\x1)}, delta angle={atan2(\y2,\x2)-atan2(\y1,\x1)-360*(atan2(\y2,\x2)-atan2(\y1,\x1)>0)}, x radius=\qoffs, y radius=\qoffs] --
		(21) let \p1=($(21)-#2$), \p2=($(23)-#2$) in
			arc[start angle={atan2(\y1,\x1)}, delta angle={atan2(\y2,\x2)-atan2(\y1,\x1)-360*(atan2(\y2,\x2)-atan2(\y1,\x1)>0)}, x radius=\qoffs, y radius=\qoffs] --
		(32) let \p1=($(32)-#3$), \p2=($(31)-#3$) in
			arc[start angle={atan2(\y1,\x1)}, delta angle={atan2(\y2,\x2)-atan2(\y1,\x1)-360*(atan2(\y2,\x2)-atan2(\y1,\x1)>0)}, x radius=\qoffs, y radius=\qoffs] --
		cycle}

		\ifx\relax#5\relax
		\def\qlwidth{1pt}
	\else
		\def\qlwidth{#5}
	\fi
	
		\ifx\relax#7\relax
		\fill \nqhedge;
	\else
		\fill[#7]\nqhedge;
	\fi

		\ifx\relax#6\relax
		\draw[dotted, line width=\qlwidth,rounded corners=\qoffs]\nqhedge;
	\else
		\draw[dotted, line width=\qlwidth,#6]\nqhedge;
	\fi
}
\let\epsilon=\varepsilon
\let\rho=\varrho
\let\theta=\vartheta
\newtheoremstyle{note}  {4pt}  {4pt}  {\sl}  {}  {\bfseries}  {.}  {.5em}          {}
\newtheoremstyle{introthms}  {3pt}  {3pt}  {\itshape}  {}  {\bfseries}  {.}  {.5em}          {\thmnote{#3}}
\newtheoremstyle{remark}  {2pt}  {2pt}  {\rm}  {}  {\bfseries}  {.}  {.3em}          {}
\theoremstyle{plain}
\newtheorem{theorem}{Theorem}[section]
\newtheorem{lemma}[theorem]{Lemma}
\newtheorem{constr}[theorem]{Construction}
\newtheorem{conj}[theorem]{Conjecture}
\newtheorem{claim}[theorem]{Claim}
\newtheorem{problem}[theorem]{Problem}
\theoremstyle{note}
\theoremstyle{remark}
\newtheorem{remark}[theorem]{Remark}
\newcommand*\patchAmsMathEnvironmentForLineno[1]{
\expandafter\let\csname old#1\expandafter\endcsname\csname #1\endcsname
\expandafter\let\csname old#1\expandafter\endcsname\csname end#1\endcsname
\renewenvironment{#1}
{\linenomath\csname old#1\endcsname}
{\csname oldend#1\endcsname\endlinenomath}}
\begin{document}

\title[On extremal problems concerning the traces of sets]{On extremal problems concerning the traces of sets}
\author[S. Piga]{Sim\'on Piga}
\address{Fachbereich Mathematik, Universit\"{a}t Hamburg, Hamburg, Germany}
\email{simon.piga@uni-hamburg.de}
\author[B.~Sch\"{u}lke]{Bjarne Sch\"{u}lke}
\address{Fachbereich Mathematik, Universit\"{a}t Hamburg, Hamburg, Germany}
\email{bjarne.schuelke@uni-hamburg.de}
\thanks{The first author was supported by ANID/CONICYT Acuerdo Bilateral DAAD/62170017 through a Ph.D. Scholarship. The second author was partially supported by G.I.F. Grant Agreements No. I-1358-304.6/2016.}

\keywords{Extremal set theory}

\begin{abstract}
Given two non-negative integers~$n$ and~$s$, define~$m(n,s)$ to be the maximal number such that in every hypergraph~$\mathcal{H}$ on~$n$ vertices and with at most~$ m(n,s)$ edges there is a vertex~$x$ such that~$\vert\mathcal{H}_x\vert\geq \vert E(\mathcal{H})\vert -s$, where~$\mathcal{H}_x=\{H\setminus\{x\}:H\in E(\mathcal{H})\}$.
This problem has been posed by F\"uredi and Pach and by Frankl and Tokushige. While the first results were only for specific small values of~$s$, Frankl determined~$m(n,2^{d-1}-1)$ for all~$d\in\mathds{N}$ with~$d\mid n$. Subsequently, the goal became to determine~$m(n,2^{d-1}-c)$ for larger~$c$. Frankl and Watanabe determined~$m(n,2^{d-1}-c)$ for~$c\in\{0,2\}$. Other general results were not known so far. 

Our main result sheds light on what happens further away from powers of two: We prove that~$m(n,2^{d-1}-c)=\frac{n}{d}(2^d-c)$ for~$d\geq 4c$ and~$d\mid n$ and give an example showing that this equality does not hold for $c=d$. The other line of research on this problem is to determine~$m(n,s)$ for small values of~$s$. In this line, our second result determines~$m(n,2^{d-1}-c)$ for~$c\in\{3,4\}$. This solves more instances of the problem for small~$s$ and in particular solves a conjecture by Frankl and Watanabe.

\end{abstract}

\maketitle
\section{Introduction}
A hypergraph~$\mathcal H$ is a pair~$(V, \mathcal F)$ where~$V$ is the set of vertices and~$\mathcal F\subseteq 2^V$ is the set of edges. In the literature, the problems we consider in this article are often presented in the context of families rather than hypergraphs. If not necessary, it is then not distinguished between the family~$\mathcal{F}\subseteq 2^V$ and the hypergraph~$(V,\mathcal{F})$. We will follow this notational path.

Let~$V$ be an~$n$-element set and let~$\mathcal{F}$ be a family of subsets of~$V$. For a subset~$T$ of~$V$ define the \emph{trace} of~$\mathcal{F}$ on~$T$ by~$\mathcal{F}_{|T}=\{F\cap T:F\in\mathcal{F}\}$. For integers~$n$,~$m$,~$a$, and~$b$, we write $$(n,m)\rightarrow (a,b)$$ if for every family~$\mathcal{F}\subseteq 2^V$ with~$\vert\mathcal{F}\vert \geq m$ and~$\vert V\vert=n$ there is an~$a$-element set~$T\subseteq V$ such that~$\vert \mathcal{F}_{|T}\vert\geq b$ (we also say that~$(n,m)$ \emph{arrows}~$(a,b)$).

The first type of question that was asked for this arrowing notation is similar to the spirit of the classic Tur\'an problem: For a fixed number of vertices~$n$, how many edges are needed such that there is a subset of vertices such that all its subsets lie in the trace. The following result on this question was conjectured by Erd\H{o}s~\cite{FurediPach} and was proved independently by Sauer~\cite{Sauer}, Shelah and Perles~\cite{Shelah}, and Vapnik and \v{C}ervonenkis~\cite{CervonenkisVapnik}. It states that for a large family~$\mathcal{F}$ on~$n$ vertices, there is an~$s$-set of vertices such that all its subsets lie in the trace of~$\mathcal{F}$. More precisely, they showed that~$(n,m)\rightarrow (s,2^s)\text{ whenever }m>\sum_{0\leq i<s}\binom{n}{i}$.

Another fundamental question that was raised in the area is how large a family can be at most so that there will still be a vertex~$v$ such that the trace on~$V\setminus \{v\}$ is not much smaller than the original family. More precisely, the following problem was posed by F\"uredi and Pach~\cite{FurediPach} and, more recently, by Frankl and Tokushige as Problem 3.8 in their monograph~\cite{FranklTokushige}\footnote{There have been slightly different versions in use for the arrowing notation and for what we denote by~$m(n,s)$. In this work, we follow the notation in~\cite{FranklTokushige}.}: 
\begin{problem}\label{prob:main}
    Given non-negative integers~$n$ and~$s$, what is the maximum value~$m(n,s)$ such that for every~$m\leq m(n,s)$ we have $$(n,m)\rightarrow (n-1,m-s).$$
\end{problem}

As described in the abstract, this problem can also be formulated as finding the maximal number~$m(n,s)$ such that the following holds. 
In every hypergraph~$\mathcal{H}$ with some~$n$-set~$V$ as vertex set and with at most~$m(n,s)$ edges there is a vertex~$x$ such that~$\vert\mathcal{H}_x\vert\geq \vert \mathcal{H}\vert -s$, where~$\mathcal{H}_x=\mathcal{H}_{|V\setminus\{x\}}=\left\{H\setminus\{x\}:H\in \mathcal{H}\right\}$.

A family~$\mathcal F$ is \textit{hereditary} if for every~$F'\subseteq F\in \mathcal F$ we have that~$F'\in \mathcal F$. 
In~\cite{Frankl} Frankl proves that among families with a fixed number of edges and vertices, the trace is minimised by hereditary families. 
Thus, the problems considered here, and in particular Problem~\ref{prob:main}, can be reduced to hereditary families (see Lemma~\ref{lem:heredissuf}). 
Note that in hereditary families, Problem~\ref{prob:main} is asking for the maximum number of edges such that there is always a vertex of small degree (as usual, we define the degree of a vertex~$v$ as the number of edges that contain~$v$). 

The investigation of this problem started with Bondy~\cite{Bondy} and Bollob\'as~\cite{Bollobas} determining~$m(n,0)$ and~$m(n,1)$, respectively. Later Frankl~\cite{Frankl} and Frankl and Watanabe~\cite{FranklWatanabe} proved part~(\ref{it:c=1}) and~(\ref{it:c=2}), respectively, of the following theorem.
\begin{theorem}\label{thm:genFrankl,FranklWatanabe}
    For~$d,n\in\mathds{N}$ and~$d|n$, we have
    \begin{enumerate}
        \item\label{it:c=1} $m(n,2^{d-1}-1)=\frac{n}{d}(2^d-1)$\,,
        \item\label{it:c=2} $m(n,2^{d-1}-2)=\frac{n}{d}(2^d-2)$\,.
    \end{enumerate}
\end{theorem}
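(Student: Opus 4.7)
The theorem has the usual two-sided structure, and by Frankl's reduction (mentioned just after Problem~\ref{prob:main}) it suffices to work with hereditary families, where~$\vert\cF\vert-\vert\cF_x\vert=\deg(x)$. Thus the upper bound on~$m(n,2^{d-1}-c)$ asks for a hereditary family with~$\tfrac{n}{d}(2^d-c)+1$ edges and min degree~$\geq 2^{d-1}-c+1$, and the matching lower bound asks that every such hereditary family indeed has at least that many edges.

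\textbf{Constructions.} Partition~$V$ into~$n/d$ blocks~$V_1,\dots,V_{n/d}$ of size~$d$. For part~(\ref{it:c=1}) take~$\cF=\bigcup_i 2^{V_i}$ with~$\emptyset$ counted once; this gives~$\tfrac{n}{d}(2^d-1)+1$ edges and every vertex has degree~$2^{d-1}$. For part~(\ref{it:c=2}) take~$\cF=\{\emptyset\}\cup\bigcup_i\bigl(2^{V_i}\setminus\{\emptyset,V_i\}\bigr)$; this gives~$\tfrac{n}{d}(2^d-2)+1$ edges and every vertex has degree~$2^{d-1}-1$. Both families are hereditary and witness the claimed upper bounds.

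\textbf{Lower bound.} I would prove the matching lower bound by induction on~$n$ along multiples of~$d$. The base case~$n=d$ is a short up-set argument: writing~$\cF$ as the complement of an upward-closed family of ``missing sets'', the min-degree hypothesis yields~$\sum_{M\notin\cF}\vert M\vert\leq d(c-1)$, which combined with upward-closure forces~$\vert\cF\vert\geq 2^d-c+1$. For the inductive step, the plan is to extract a~$d$-vertex set~$S\subseteq V$ with~$2^S\subseteq\cF$ and such that every edge of~$\cF$ meeting~$S$ is contained in~$S$ --- a \emph{separated} cube; peeling~$S$ then removes exactly~$2^d$ edges and~$d$ vertices, and leaves a hereditary family on~$n-d$ vertices still satisfying the min-degree hypothesis, so the inductive hypothesis closes the argument.

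\textbf{Main obstacle.} Producing a separated cube is the technical heart of the proof. The min degree~$2^{d-1}$ is well below the Sauer--Shelah threshold on~$n-1$ vertices for large~$n$, so the hypothesis alone does not force a~$d$-set whose edges close up inside of it. My plan is therefore first to reduce to a shifted hereditary family via Frankl's shifting operator (which preserves~$\vert\cF\vert$ and only decreases the trace at each vertex), after which edge mass concentrates on initial segments of~$[n]$ and a separated cube on~$[d]$ can be extracted by a shadow and degree counting. Part~(\ref{it:c=2}) requires more delicate bookkeeping both at the base case and when the candidate peeling cubes fail to be fully separated; this finer analysis is essentially where Frankl--Watanabe's refinement of Frankl's original argument enters.
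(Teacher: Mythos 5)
Your extremal constructions are correct and coincide with the ones the paper presents after the theorem. The lower bound is where things diverge. The paper attributes Theorem~\ref{thm:genFrankl,FranklWatanabe} to Frankl and to Frankl--Watanabe and describes their method in Section~\ref{sec:ideaofproof}: assign every vertex the \emph{uniform weight} $w(v)=\sum_{H\in L_v}\tfrac{1}{|H|+1}$, so that $\sum_{v\in V}w(v)=|\cF\setminus\{\emptyset\}|$ by double counting; then apply Katona's weighted Kruskal--Katona theorem (Theorem~\ref{thm:genKK}) to the hereditary link $L_v$ with $f(k)=\tfrac1{k+1}$ to get $w(v)\ge W(2^{d-1}-c+1)=\tfrac{2^d-c}{d}$ whenever $\deg(v)\ge 2^{d-1}-c+1$. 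For $c\in\{1,2\}$ this per-vertex bound already sums to $\tfrac{n}{d}(2^d-c)$, with no induction and no structural extraction. That is a fundamentally different, and much shorter, route than yours.

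Your peeling scheme has a real gap at the separated-cube step, and the shifting you invoke does not repair it --- in fact it makes the object you want impossible. Suppose $\cF$ is shifted, hereditary, every vertex is non-isolated, and $n>d$. If $S$ is any $d$-set with $2^S\subseteq\cF$ and $S\neq[d]$, pick $v<\max S$ with $v\notin S$ and any $m\in S\setminus\{\max S\}$; since $\{m,\max S\}\in 2^S\subseteq\cF$, shiftedness forces $\{m,v\}\in\cF$, a crossing edge, so $S$ is not separated. If $S=[d]$, vertex $d+1$ has nonempty link, so either some $\{j,d+1\}\in\cF$ with $j\le d$, which is already crossing, or $\{j,d+1\}\in\cF$ with $j>d+1$, and shifting $j\to 1$ gives $\{1,d+1\}\in\cF$, again crossing. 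So a shifted hereditary family with $n>d$ never contains a separated $d$-cube, and your inductive step cannot start. The ``shadow and degree counting'' you gesture at would have to overcome this structural impossibility, which it cannot; you would need to abandon the separated-cube idea entirely (for instance by weighting crossing edges, which is essentially what the Katona-style argument and the paper's own proofs of Theorems~\ref{thm:main} and~\ref{thm:secmain} do).
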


Consider a family consisting of a set of size~$d$ and all possible subsets, and take~$n/d$ vertex disjoint copies of it. The resulting family has minimum degree~$2^{d-1}$ and~$\frac{n}{d}(2^d-1)+1$ edges. Thus, this family is an extremal construction for~(\ref{it:c=1}). By taking out all sets of size~$d$, we obtain an extremal construction for~(\ref{it:c=2}).

Our main result makes further progress on Problem~\ref{prob:main}, solving it for general~$s=2^{d-1}-c$ as long as~$c$ is linearly small in~$d$.
\begin{theorem}[Main theorem]\label{thm:main}
	Let~$d,c,n\in\mathds{N}$ with~$d\geq 4c$ and~$d|n$. Then $$m(n,2^{d-1}-c)=\frac{n}{d}(2^d-c) .$$
\end{theorem}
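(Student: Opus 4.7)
By Lemma~\ref{lem:heredissuf} it suffices to prove the theorem for hereditary families $\cH$, since in that case $|\cH|-|\cH_x|=\deg_\cH(x)$. The statement then decomposes into \textbf{(a)} constructing a hereditary family $\cH^{\mathrm{ex}}$ on $n$ vertices with $|\cH^{\mathrm{ex}}|=\frac{n}{d}(2^d-c)+1$ and $\delta(\cH^{\mathrm{ex}})\geq 2^{d-1}-c+1$, and \textbf{(b)} proving that every hereditary $\cH$ on $n$ vertices with $\delta(\cH)\geq 2^{d-1}-c+1$ has $|\cH|\geq \frac{n}{d}(2^d-c)+1$.

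For \textbf{(a)}, I would take the up-set $\mathcal U:=\{[d]\}\cup\{[d]\setminus\{i\}:1\leq i\leq c-2\}\subseteq 2^{[d]}$ (empty if $c=1$), which is well defined because $d\geq 4c\geq c-2$; it has exactly $c-1$ members and every vertex of $[d]$ lies in at most $c-1$ of them. The complement $\cF_0:=2^{[d]}\setminus\mathcal U$ is hereditary with $|\cF_0|=2^d-c+1$ and $\delta(\cF_0)\geq 2^{d-1}-c+1$. Taking $n/d$ vertex-disjoint copies of $\cF_0$ on disjoint $d$-sets and identifying the empty sets across copies then produces $\cH^{\mathrm{ex}}$.

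For \textbf{(b)} the plan is induction on $n$. The base case $n=d$ reduces, via complementation, to showing that every up-set $\mathcal U\subseteq 2^{[d]}$ whose vertex-degrees are all at most $c-1$ has $|\mathcal U|\leq c-1$. Since $\mathcal U$ is upwards closed, each $S\in\mathcal U$ contains all $2^{d-|S|}$ of its supersets and each vertex of $S$ sits in every one of them; the max-degree assumption therefore forces $|S|\geq d-\lfloor\log_2(c-1)\rfloor$ for $S\neq[d]$. Combining this with the forced presence of $[d]\in\mathcal U$ and $\sum_{S\in\mathcal U}|S|=\sum_v\deg_\mathcal U(v)\leq d(c-1)$ yields $|\mathcal U|\leq c-1$ under $d\geq 4c$ (possibly after sharpening the lower bound on $|S|$ via a Kruskal--Katona-type shadow argument to cover the large-$c$ regime). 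For the inductive step I would split on $\delta(\cH)$: when $\delta(\cH)\geq 2^{d-1}$, Theorem~\ref{thm:genFrankl,FranklWatanabe}\,(\ref{it:c=1}) already gives $|\cH|\geq \frac{n}{d}(2^d-1)+1$, which is more than required. Otherwise pick a minimum-degree vertex $v$ with $\deg_\cH(v)\in[2^{d-1}-c+1,2^{d-1}-1]$, and use a structural analysis of the hereditary link $L(v)\subseteq 2^{V\setminus\{v\}}$ to locate a $d$-subset $D\ni v$ on which $\cH$ restricts to at least $2^d-c$ edges and whose removal leaves a hereditary family on $V\setminus D$ still satisfying the inductive hypothesis; a final application of the hypothesis then closes the argument.

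\emph{The main obstacle} is this peeling step: removing $D$ can drop the degree of some $u\in V\setminus D$ below $2^{d-1}-c+1$ through edges of $\cH$ that straddle $D$ and $V\setminus D$, violating the inductive hypothesis. The condition $d\geq 4c$ must therefore be leveraged simultaneously to select $D$ so that it interacts only minimally with the rest of $\cH$, and to amortise unavoidable residual degree losses against the slack between the $\geq 2^d-c$ edges harvested on $D$ and the per-cluster target contribution of $2^d-c$. The paper's example showing that equality fails for $c=d$ confirms that a linear gap between $c$ and $d$ is genuinely required for any argument of this shape.
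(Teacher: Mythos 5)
Your part \textbf{(a)} (the extremal construction) is correct and essentially matches the paper's: the up-set $\mathcal U$ you describe is a valid choice, and the complement $\cF_0$ together with $n/d$ disjoint copies gives the required family. The paper uses $\mathcal R(2^d-(c-1))$ but the verification is the same.

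Part \textbf{(b)} takes a genuinely different route from the paper (induction on $n$ with a peeling step, versus a global discharging argument), but as written it has a real gap that you yourself flag and do not close. Concretely: after choosing a minimum-degree vertex $v$ and a candidate $d$-set $D\ni v$, you need simultaneously (i) $|\cH[D]|\geq 2^d-c+1$, which requires the \emph{induced} minimum degree on $D$ to be at least $2^{d-1}-c+1$, and (ii) $\delta(\cH[V\setminus D])\geq 2^{d-1}-c+1$. But if $D$ is tight enough to satisfy (i), any vertex $u\in V\setminus D$ whose edges mostly run into $D$ loses that degree on removal, breaking (ii); and if you relax $D$ to protect outside degrees, you lose (i). You say the $d\geq 4c$ slack ``must be leveraged'' to select $D$ and ``amortise'' residual losses, but no mechanism is given, and there is no obvious one: a minimum-degree vertex need not have a small neighbourhood $V_v$, its link need not be close to $\mathcal R(2^{d-1}-(c-1))$, and crossing edges can be arbitrarily distributed. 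The base case $n=d$ also has an acknowledged hole for large $c$ (your degree-sum bound only yields $|\mathcal U|\leq c-1$ for $c\leq 5$ or so); a clean proof is the paper's Lemma~\ref{lem:locfamiso}\,(\ref{it:locdeg}), obtained via the complementary hereditary family.

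The paper avoids induction entirely. It reduces to hereditary $\cF$ with $\delta(\cF)\geq 2^{d-1}-c+1$, and assigns each vertex a weight so that $\sum_v w_\cF(v)\leq |\cF|-1$, then shows the \emph{average} weight is at least $\frac{2^d-c}{d}$. Vertices are classified into heavy vertices (large neighbourhood $V_v$), clusters $V_v$ around a maximal packing $\mathcal L$ of disjoint light neighbourhoods (subdivided into self-contained $\mathcal L_1$ and crossing $\mathcal L_2$), and leftover light vertices $\widebar{\mathcal L}$. Heavy vertices and $\widebar{\mathcal L}$ are handled by a quantitative Kruskal--Katona-type bound (Lemma~\ref{lem:weight}) that gives a surplus over $W(2^{d-1}-c+1)$ when the link deviates from $\mathcal R(2^{d-1}-c+1)$; clusters are handled by the local Lemma~\ref{lem:locfamiso}; and crucially, the unit weight of a crossing $2$-edge is split \emph{non-uniformly} between its endpoint in a cluster and its endpoint in $\widebar{\mathcal L}$, so both sides meet the target simultaneously. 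This sidesteps exactly the peeling problem you identify: instead of deleting a cluster and re-running, each crossing edge is charged once, split between the two sides, and the average is computed globally. If you want to rescue the inductive framing you would effectively have to rediscover this amortisation, at which point it is cleaner to phrase it as discharging.
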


\begin{remark}
In fact, our proof of Theorem~\ref{thm:main} yields that for~$d\geq 4c$ and~$m\leq \frac{n}{d}(2^d-c)$ we have~$(n,m)\rightarrow(n-1,m-(2^{d-1}-c))$ without any divisibility conditions on~$n$.
The assumption~$d|n$ is only necessary for the extremal constructions showing the maximality of~$\frac{n}{d}(2^d-c)$.
Analogous remarks hold for Theorem~\ref{thm:genFrankl,FranklWatanabe} above and Theorem~\ref{thm:secmain} below.
In Section~\ref{sec:conrem} we provide a construction showing that the equality in Theorem~\ref{thm:main} does not hold for~$d=c$ (see Construction~\ref{constr:nonloc}).
\end{remark}
 
One might also try to solve Problem~\ref{prob:main} for small values of~$s$. Apart from the aforementioned results by Bondy and Bollob\'as, progress was made by Frankl~\cite{Frankl}, Watanabe~\cite{Watanabe, Watanabe2}, and by Frankl and Watanabe~\cite{FranklWatanabe}. In~\cite{FranklWatanabe} they conjectured that~$m(n, 12) = (28/5 + o(1))n$. Theorem~\ref{thm:main} does not consider cases for which~$d$ is very small in terms of~$c$. The following results extend Theorem~\ref{thm:genFrankl,FranklWatanabe} to~$c=3$ and~$4$ and every~$d\geq 3$ (for smaller~$d$ the respective~$m(n,s)$ is not defined). In particular, it proves the conjecture of Frankl and Watanabe for~$s=12$ in a strong sense.
\begin{theorem}\label{thm:secmain}
    Let~$d,n\in\mathds{N}$ with~$d\geq 3$ and~$d|n$. Then
    \begin{enumerate}
        \item\label{it:thm:c=3} $m(n,2^{d-1}-3)=\frac{n}{d}(2^d-3)$ and
        \item\label{it:thm:c=4} $m(n,2^{d-1}-4)=\frac{n}{d}(2^d-4)$. In particular,~$m(n,12)=\frac{28}{5}n$.
    \end{enumerate}
\end{theorem}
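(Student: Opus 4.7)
My plan is to establish both the upper and the lower bound for $m(n, 2^{d-1}-c) = (n/d)(2^d - c)$, using the fact that Theorem~\ref{thm:main} already settles the cases $d \geq 4c$; so the new content lies in the finitely many small values $d \in \{3, \ldots, 4c-1\}$ for each $c \in \{3, 4\}$.

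For the \emph{upper bound}, I would construct a hereditary family $\mathcal{H}_0$ on $[d]$ with $|\mathcal{H}_0| = 2^d - (c-1)$ and minimum degree at least $2^{d-1} - c + 1$ by deleting $c - 1$ carefully chosen maximal sets from $2^{[d]}$: for $c = 3$ take $\mathcal{H}_0 = 2^{[d]} \setminus \{[d], [d]\setminus\{1\}\}$, and for $c = 4$ additionally remove $[d]\setminus\{2\}$. Each vertex loses at most $c-1$ of its $2^{d-1}$ incident edges, so the minimum degree satisfies the required bound. Taking $n/d$ vertex-disjoint copies of $\mathcal{H}_0$ and identifying the empty sets yields a hereditary family on $n$ vertices with $(n/d)(2^d - c) + 1$ edges and minimum degree exceeding $s = 2^{d-1} - c$, which shows $m(n,s) \leq (n/d)(2^d - c)$.

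For the \emph{lower bound}, by Lemma~\ref{lem:heredissuf} it suffices to prove that every hereditary family $\mathcal{H}$ on $n$ vertices with minimum degree at least $2^{d-1} - c + 1$ has at least $(n/d)(2^d - c) + 1$ edges. My approach is to decompose $\mathcal{H}$ along its connectivity components $V_1, \ldots, V_r$ (with $u, v$ linked iff some edge contains both). Setting $\mathcal{H}_i := \{F \in \mathcal{H} : F \subseteq V_i\}$, one has $|\mathcal{H}| = 1 + \sum_i (|\mathcal{H}_i| - 1)$, and each $\mathcal{H}_i$ is a hereditary family on $V_i$ inheriting the minimum-degree bound. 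The key ingredient is a per-component inequality relating $|\mathcal{H}_i|$ to $|V_i|$. I would approach this by induction on $|V_i|$ via the link $\mathcal{L}_v = \{F\setminus\{v\}: v \in F \in \mathcal{H}_i\}$, which is hereditary on $|V_i|-1$ vertices with $|\mathcal{L}_v| = \deg(v) \geq 2^{d-1} - c + 1$, so that $\mathcal{H}_i$ splits naturally into the edges containing $v$ (in bijection with $\mathcal{L}_v$) and the edges avoiding $v$, each amenable to the inductive hypothesis.

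The main obstacle is the per-component analysis in the smallest-$d$ regime, namely $d \in \{3, \ldots, 11\}$ for $c = 3$ and $d \in \{3, \ldots, 15\}$ for $c = 4$. Since the natural per-component inequality $|\mathcal{H}_i| - 1 \geq (|V_i|/d)(2^d-c)$ is tight only at $|V_i| = d$ and may fail for smaller $|V_i|$, the argument must quantify a deficit on small components and offset it by a surplus on larger ones, which ultimately reduces to verifying a structural classification of near-extremal hereditary families on $d$ vertices. The critical instance $c = 4$, $d = 5$ (which yields the Frankl--Watanabe conjecture $m(n,12) = 28n/5$) is likely to require a particularly delicate case analysis of $5$-vertex hereditary families of minimum degree $\geq 13$, combined with the observation that such families must be close in structure to the extremal $\mathcal{H}_0$ above.
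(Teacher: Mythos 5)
Your upper-bound construction (disjoint copies of a hereditary family on $[d]$ obtained by deleting $c-1$ maximal sets from $2^{[d]}$) is correct and is the same as the paper's extremal family~$\mathcal{F}_0$, which is built from~$\mathcal{R}(2^d-(c-1))$.

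The lower bound, however, is only sketched and the sketch has two concrete problems. First, the inductive step does not close: if you fix~$v$ and split~$\mathcal{H}_i$ into the edges containing~$v$ (yielding~$\mathcal{L}_v$) and the edges avoiding~$v$ (call them~$\mathcal{B}$), then neither~$\mathcal{L}_v$ nor~$\mathcal{B}$ inherits the minimum-degree hypothesis~$\delta\geq 2^{d-1}-c+1$. The degree of~$u$ in~$\mathcal{L}_v$ is the number of edges containing both~$u$ and~$v$, which can be as small as~$0$, and the degree of~$u$ in~$\mathcal{B}$ is~$\deg_{\mathcal{H}_i}(u)$ minus exactly this codegree, so it too can drop far below the threshold. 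Hence ``amenable to the inductive hypothesis'' is not justified. Second, the per-component inequality~$|\mathcal{H}_i|-1\geq(|V_i|/d)(2^d-c)$ for a \emph{connected} hereditary family is not a reduction at all: the extremal configurations are disjoint unions of~$d$-vertex blocks, so once you restrict to a single connected component you face the full original problem with nothing gained, and your proposal explicitly leaves this step (``a structural classification of near-extremal hereditary families'') unproved. That step is precisely where all the content lies; acknowledging it as an obstacle does not discharge it.

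The paper's proof takes a genuinely different route and does not use connectivity. It follows the weight-function strategy of Theorem~\ref{thm:main} but with a sharper notion of ``light'' vertex, namely~$L_v\cong\mathcal{R}(2^{d-1}-(c-1))$ rather than just~$|V_v|=d$; it partitions~$V$ into heavy vertices, clusters~$V_v$ of two types, and a remainder~$\widebar{\mathcal{L}}$; and it redistributes weight along the crossing~$3$-edges collected in the set~$\mathcal{S}$. The surplus needed to make this work for~$d$ as small as~$5$ comes from Lemma~\ref{lem:weight}\,(\ref{it:surplusspec}), which gives an extra~$\min(1/6,1/d)$ whenever a hereditary family of the right size is not isomorphic to~$\mathcal{R}(2^d-c)$, together with a case analysis in Claim~\ref{cl:weightL2} of the at most six missing sets in~$2^{V_v}\setminus\mathcal{F}$. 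This is what replaces the unproved ``per-component'' step in your outline. For~$d\in\{3,4\}$ the paper simply cites Frankl--Watanabe and Watanabe rather than reproving those cases.
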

Note that for larger~$d$, this theorem is of course a special case of Theorem~\ref{thm:main}.

\subsection{Idea of the proof}\label{sec:ideaofproof}

To show the maximality of~$\frac{n}{d}(2^d-c)$ we give a construction similar to the one presented after Theorem~\ref{thm:genFrankl,FranklWatanabe}. 
As mentioned above, Lemma~\ref{lem:heredissuf} reduces the problem to a problem in hereditary families.
We need to show that for every hereditary hypergraph~$\mathcal F$ on~$n$ vertices with minimum degree at least~$2^{d-1}-c+1$ we have that $$\vert\mathcal F \vert \geq  \frac{n}{d}(2^d-c)+1.$$
In the proof of Theorem~\ref{thm:genFrankl,FranklWatanabe}~\cite{Frankl,FranklWatanabe} the equality~$\vert \mathcal F \setminus \{\emptyset\}\vert = \sum_{v\in V} \sum_{H\in L_v} \frac{1}{\vert H\vert +1}$, where~$L_v$ is \emph{the link of the vertex~$v$}, was used, which comes from a simple double counting argument.
Subsequently, they used a generalised form of the Kruskal-Katona Theorem (see Theorem~\ref{thm:genKK} below) to obtain a general lower bound for~$\sum_{H\in L_v}\frac{1}{\vert H\vert +1}$ for every~$v$. Due to the aforementioned double counting this in turn yields a lower bound on the number of edges.

For~$c\geq 3$ there exist extremal families which show that a general bound on~$\sum_{H\in L_v}\frac{1}{\vert H\vert +1}$ for every vertex~$v$ is not sufficient to provide the desired bound on the number of edges.
To overcome this difficulty first observe that the double counting argument can be generalised by interpreting~$\sum_{H\in L_v}\frac{1}{\vert H\vert +1}$ as the weight~$w_{\mathcal{F}}(v)$ of a vertex~$v$. 
We will refer to this weight as~\emph{uniform weight} since it can be imagined as uniformly distributing the unit weight of an edge to each of its vertices.
In contrast, to prove Theorem~\ref{thm:main} and Theorem~\ref{thm:secmain}, we will use a non-uniform weight.
Moreover, instead of bounding the weight of single vertices we will bound the weight of sets of vertices. 

To this aim take the maximum possible set of vertices with small uniform weight such that their neighbourhoods are pairwise disjoint. 
Call these vertices together with their neighbours~\emph{clusters}.
Note that in this way the neighbourhood of every vertex with small uniform weight needs to intersect some cluster. 
For bounding the weight of vertices whose neighbourhood does not intersect any cluster (and therefore have a large uniform weight), we introduce a ``local'' lemma (see Lemma~\ref{lem:weight}) which is a close relative to a general form of the Kruskal-Katona theorem. 
Given a vertex of fixed degree, it provides a lower bound on the uniform weight and furthermore, the minimum weight surplus if its link deviates enough from the minimising link.
Since the link of vertices whose neighbourhood does not intersect any cluster indeed deviates enough from the minimising link, the lemma then gives that these will have a large weight.

The next step is to bound the average weight of the vertices in each such cluster.
Even if the number of edges inside a cluster is not large enough,~$\mathcal{F}$ being hereditary and the minimum degree of~$\mathcal{F}$ still provide some lower bound for the number of edges in a cluster. 
Then a second local lemma (Lemma~\ref{lem:locfamiso}) yields that there are several vertices within that cluster whose degree (with respect to the cluster) is not the minimum degree in~$\mathcal{F}$. 
Therefore, there exist several crossing edges, i.e., edges containing vertices from both the inside and the outside of the cluster.
If we use the uniform weight, these crossing edges will contribute enough to the weight of the cluster, even more than needed.

At this point we still need to bound the weight of vertices with small uniform weight lying outside of any cluster.
As mentioned above, the neighbourhood of every such vertex intersects some cluster, meaning every such vertex is contained in a crossing edge.
Recall that in fact, a uniform weight on crossing edges would contribute more weight than needed for the inside of a cluster. 
Now the second idea comes into play: the unit weight of these edges will be distributed non-uniformly among its vertices.
Hence, when splitting the unit weight of such a crossing edge according to the above mentioned imbalance, both sides will get a share that is big enough.

Note that this strategy is compatible with the extremal constructions in so far as that those are composed of disjoint copies of locally optimal families.

\section{Preliminaries}

In this work we consider the set of natural numbers~$\mathds{N}$ to start with~$1$ and the logarithms considered are to the base~$2$. Further, for~$i\in\mathds{Z}$ we set as usual~$[i]=\{1,\dots ,i\}$, and it is also convenient to define~$[i]_0=\{0,\dots ,i\}$. Given a set~$F\subseteq \mathds{N}$ and some~$i\in\mathds{N}$, we denote by~$F+i$ the set~$\{j+i:j\in F\}$. For our considerations isolated vertices, i.e., vertices that are contained in the vertex set of a hypergraph but do not lie in any edges, usually do not play an important r\^ole. This will lead to a few easy peculiarities in notation.
For two hypergraphs~$\mathcal{H}$ and~$\mathcal{H}'$ we write~$\mathcal{H}\cong \mathcal{H}'$ if they are 
isomorphic up to isolated vertices,
more precisely, if there are vertex sets~$V$ disjoint to~$V(\mathcal{H})$ and~$V'$ disjoint to~$V(\mathcal{H}')$ such that the hypergraph~$(V(\mathcal{H})\dcup V,E(\mathcal{H}))$ is isomorphic to~$(V(\mathcal{H}')\dcup V',E(\mathcal{H}'))$.

For a hypergraph~$\mathcal{H}=(V,E)$ and~$v\in V$ we define the link~$L_v$ of~$v$ to be the hypergraph on~$V$ with edge set~$\{F\setminus\{v\}: F\in E\}$.
Further, we write $$V_v=\{w\in V\colon\text{there is an } e\text{ with }\{v,w\}\subseteq e\in E\} \, ,$$ note that if~$v$ is not an isolated vertex, then~$v\in V_v$. This notation will be useful in the proof of Theorem~\ref{thm:main} when defining the clusters mentioned in the overview of the proof.



The following lemma due to Frankl~\cite{Frankl} provides the aforementioned reduction of Problem~\ref{prob:main} to hereditary families.
\begin{lemma}\label{lem:heredissuf}
	For~$n,m,a,b\in\mathds{N}$ the following statements are equivalent.
	\begin{enumerate}
		\item For every~$n$-set~$V$ and every hereditary family~$\mathcal{F}\subseteq 2^V$ with~$\vert \mathcal{F}\vert \geq m$, there exists a set~$T\subseteq V$ with~$\vert T\vert = a$ such that~$\vert\mathcal{F}_{|T}\vert\geq b$.
		\item $(n,m)\rightarrow(a,b)$
	\end{enumerate}
\end{lemma}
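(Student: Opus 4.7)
The implication from the second statement to the first is immediate, as hereditary families form a subclass of all families; the plan is therefore to establish the converse. Given an arbitrary family $\mathcal{F} \subseteq 2^V$ with $|\mathcal{F}| \geq m$, I intend to produce a set $T$ of size $a$ with $|\mathcal{F}_{|T}| \geq b$ by transforming $\mathcal{F}$ into a hereditary family of the same size to which the first statement can be applied, and then transferring the trace bound back to $\mathcal{F}$.

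The transformation I have in mind is the classical vertex compression. For each $v \in V$ define $S_v(\mathcal{F})$ by replacing each $F \in \mathcal{F}$ with $F \setminus \{v\}$ whenever $F \setminus \{v\} \notin \mathcal{F}$, and leaving $F$ unchanged otherwise. Evidently $|S_v(\mathcal{F})| = |\mathcal{F}|$, and iterating the compressions over all $v \in V$ in arbitrary order must terminate, since the quantity $\sum_{F \in \mathcal{F}} |F|$ strictly decreases at each non-trivial step. The resulting family $\tilde{\mathcal{F}}$ is fixed by every $S_v$, and a short check shows that any such family is hereditary: if $F \in \tilde{\mathcal{F}}$ and $v \in F$, then $F \setminus \{v\}$ must already lie in $\tilde{\mathcal{F}}$, so every single-element deletion stays in the family, and iterating yields all subsets of $F$.

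The crucial point is trace monotonicity, namely $|S_v(\mathcal{F})_{|T}| \leq |\mathcal{F}_{|T}|$ for every $T \subseteq V$. If $v \notin T$ this is clear, since the compression cannot affect $F \cap T$ at all. The interesting case is $v \in T$, where I would partition $2^T$ into pairs $\{X, X \cup \{v\}\}$ with $X \subseteq T \setminus \{v\}$ and verify within each pair that $S_v(\mathcal{F})_{|T}$ contains no more of its two elements than $\mathcal{F}_{|T}$ does, by a short case analysis on which of $X$ and $X \cup \{v\}$ originally lie in $\mathcal{F}_{|T}$ and which members of $\mathcal{F}$ witness them. This bookkeeping is the main, though routine, obstacle; the non-trivial sub-case is when only $X \cup \{v\}$ lies in $\mathcal{F}_{|T}$, witnessed by some $F \in \mathcal{F}$ with $v \in F$, and one has to trace what happens to $F$ under $S_v$ in order to see that the pair still contributes at most one element afterwards. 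Once trace monotonicity is in hand, applying the first statement to $\tilde{\mathcal{F}}$ produces a set $T$ of size $a$ with $|\tilde{\mathcal{F}}_{|T}| \geq b$, and the monotonicity immediately gives $|\mathcal{F}_{|T}| \geq |\tilde{\mathcal{F}}_{|T}| \geq b$, which is exactly what $(n,m) \rightarrow (a,b)$ demands.
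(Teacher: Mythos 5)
Your proof is correct and follows essentially the same route as the source the paper cites (Frankl, \emph{On the trace of finite sets}); the paper itself does not reprove the lemma but relies on exactly this compression argument. All the key steps are in place: the compressions $S_v$ preserve cardinality, the potential $\sum_{F}|F|$ forces termination at a family fixed by every $S_v$ (which is then hereditary), and the pairwise bookkeeping on $\{X, X\cup\{v\}\}$ establishes the trace monotonicity $|S_v(\mathcal{F})_{|T}|\leq|\mathcal{F}_{|T}|$ that makes the reduction go through.
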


In particular, this means that in the proof of our results we only need to consider hereditary families.

Let~$n\in\mathds{N}$, for~$A,B\in 2^{[n]}$ we say that~$A\prec _{col}B$ or~$A$ precedes~$B$ in the \emph{colexicographic order} if~$\max (A\vartriangle B) \in B$. 
Let~$m\in\mathds{N}$ with~$m\leq 2^n$ and define~$\mathcal{R}_n(m)$ to be the family on~$n$ vertices containing the first~$m$ sets of~$2^{[n]}$ according to the colexicographic order. 
Note that for~$n\leq n'$ and~$m\leq 2^n$, we have~$\mathcal{R}_n(m)\cong \mathcal{R}_{n'}(m)$ and hence, we will not distinguish between~$\mathcal{R}_n(m)$ and~$\mathcal{R}_{n'}(m)$ and we will omit the subscript. 
The following theorem due to Katona~\cite{Katona} is a generalisation of the well-known Kruskal-Katona theorem.
\begin{theorem}\label{thm:genKK}
Let~$f: \mathds{N}_0 \rightarrow \mathds{R}$ be a monotone non-increasing function and let~$\mathcal{F}$ be a hereditary family with~$\vert \mathcal{F} \vert = m$. Then $$\sum_{F\in\mathcal{F}}f(\vert F\vert )\geq \sum_{R\in\mathcal{R}(m)}f(\vert R\vert ) .$$
\end{theorem}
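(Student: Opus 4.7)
The plan is to reduce Theorem~\ref{thm:genKK} to the special case of indicator functions $f = \mathbbm{1}_{[\cdot \leq j]}$, and then to induct on $|V|$. For the reduction, choose $K$ larger than every set size involved and write
\begin{equation*}
f(k) \;=\; f(K) + \sum_{j=0}^{K-1}\bigl(f(j)-f(j+1)\bigr)\,\mathbbm{1}_{[k\leq j]} \qquad (0 \leq k \leq K).
\end{equation*}
The coefficients $f(j) - f(j+1)$ are non-negative since $f$ is non-increasing, and the constant term contributes equally on both sides of the desired inequality since $|\mathcal{F}| = |\mathcal{R}(m)| = m$. Thus Theorem~\ref{thm:genKK} follows from the family of indicator inequalities
\begin{equation*}
\bigl|\{F \in \mathcal{F} : |F| \leq j\}\bigr| \;\geq\; \bigl|\{R \in \mathcal{R}(m) : |R| \leq j\}\bigr| \qquad \text{for every } j \in \mathds{N}_0.
\end{equation*}

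I would establish these by induction on $|V|$, the case $|V|=0$ being trivial. Given $\mathcal{F}$ on $V$, pick any vertex $v \in V$ and decompose $\mathcal{F}$ into $\mathcal{F}_{-v} = \{F \in \mathcal{F} : v \notin F\}$ and the link $L_v = \{F \setminus \{v\} : v \in F \in \mathcal{F}\}$. Both are hereditary families on $V \setminus \{v\}$, and the hereditariness of $\mathcal{F}$ forces $L_v \subseteq \mathcal{F}_{-v}$. Writing $a = |\mathcal{F}_{-v}|$ and $b = |L_v|$ one has $a + b = m$ and, crucially, $a \geq b$. Counting sets of size at most $j$ according to whether they contain $v$, and applying the induction hypothesis to $\mathcal{F}_{-v}$ and $L_v$, yields
\begin{equation*}
\bigl|\{F \in \mathcal{F} : |F| \leq j\}\bigr| \;=\; \bigl|(\mathcal{F}_{-v})_{\leq j}\bigr| + \bigl|(L_v)_{\leq j-1}\bigr| \;\geq\; \bigl|\mathcal{R}(a)_{\leq j}\bigr| + \bigl|\mathcal{R}(b)_{\leq j-1}\bigr|.
\end{equation*}
Hence the result reduces to the purely numerical inequality
\begin{equation*}
\bigl|\mathcal{R}(a)_{\leq j}\bigr| + \bigl|\mathcal{R}(b)_{\leq j-1}\bigr| \;\geq\; \bigl|\mathcal{R}(a+b)_{\leq j}\bigr| \qquad \text{whenever } a \geq b.
\end{equation*}

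Since the $m$-th colex subset of $\mathds{N}$ has size $s_2(m-1)$, the binary digit sum of $m-1$, this last inequality is equivalent to
\begin{equation*}
\bigl|\{0 \leq i < b : s_2(a+i) \leq j\}\bigr| \;\leq\; \bigl|\{0 \leq i < b : s_2(i) \leq j-1\}\bigr|,
\end{equation*}
which I would prove by an explicit injection obtained by induction on the binary length of $a$: split off the leading bit $a = 2^p + a'$ and distinguish the indices $i < b$ according to whether $a' + i$ triggers a carry across bit $p$. The main obstacle is precisely this combinatorial inequality; the layer-cake reduction and the vertex-removal induction are essentially formal. The difficulty lies in controlling the carries produced by the binary addition $a + i$, and it is exactly the hypothesis $b \leq a$ (inherited from $L_v \subseteq \mathcal{F}_{-v}$) that makes a suitable injection possible.
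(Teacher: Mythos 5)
The paper does not actually prove Theorem~\ref{thm:genKK}; it is stated as a result of Katona and used as a black box. So there is no ``paper's proof'' to compare against, and your proposal has to be judged on its own terms.

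The two reductions you carry out are both sound. The layer-cake identity $f(k) = f(K) + \sum_{j=0}^{K-1}\bigl(f(j)-f(j+1)\bigr)\mathbbm{1}_{[k\leq j]}$ correctly reduces the theorem to the counting inequalities $\bigl|\mathcal{F}_{\leq j}\bigr| \geq \bigl|\mathcal{R}(m)_{\leq j}\bigr|$, and the vertex-deletion induction is well set up: $\mathcal{F}_{-v}$ and $L_v$ are hereditary on $V\setminus\{v\}$, $L_v\subseteq\mathcal{F}_{-v}$ forces $a\geq b$, and the bookkeeping of sets by size is right. So you have correctly reduced the theorem to the purely arithmetic statement
\begin{equation*}
N_j(a) + N_{j-1}(b) \;\geq\; N_j(a+b) \qquad\text{for } a\geq b, \quad\text{where } N_j(m) = \bigl|\{0\leq i<m : s_2(i)\leq j\}\bigr|.
\end{equation*}

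The gap is that this last inequality --- which you yourself flag as ``the main obstacle'' --- is not proven; it is only gestured at. This is not a formality. The inequality is essentially a counting form of Kruskal--Katona in disguise (taking $\mathcal{G}=\mathcal{R}(a)\cup\{R\cup\{v\}:R\in\mathcal{R}(b)\}$ with a fresh vertex $v$, it \emph{is} the theorem applied to the hereditary family $\mathcal{G}$, so a proof has to break this apparent circularity by a genuinely independent argument). Your proposed carry-based injection on the binary expansion is one plausible route, but the sketch as stated does not cover all cases: you split indices $i<b$ according to whether $a'+i$ carries across bit $p$ where $a=2^p+a'$, yet when $b>2^p$ (which happens whenever $a$ is only slightly above a power of two and $b$ is close to $a$) the index $i$ itself can already have bit $p$ set, producing a second carry that your two-way case distinction does not account for. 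One then has to either control a more elaborate case analysis or switch to a different form of the induction (e.g.\ on the leading bit of $a+b$ rather than of $a$, which introduces an auxiliary subadditivity statement $N_{j-1}(x)+N_{j-1}(y)\geq N_{j-1}(x+y)$ that also needs proof). In short: the reduction is fine, but the heart of the argument is missing, and the missing piece is not routine. The classical alternative --- applying compression (shifting) operators to $\mathcal{F}$ and showing each compression can only decrease $\sum_F f(|F|)$ --- avoids the binary-digit arithmetic entirely and is probably closer to what Katona does.
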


In the proofs of Theorems~\ref{thm:main} and~\ref{thm:secmain}, normally Theorem~\ref{thm:genKK} is applied with $\mathcal F$ being the link of a vertex. 
Moreover, as we usually consider the uniform weight mentioned in Section~\ref{sec:ideaofproof}, the function~$f$ will often be~$f(k)=\frac{1}{k+1}$.
The weight of~$\mathcal{R}(m)$ with respect to this~$f$ will come up repeatedly and hence, for brevity we set~$W(m):=\sum_{R\in \mathcal{R}(m)}\frac{1}{\vert R\vert +1}$. Note that we have~$W(2^{d-1})=\frac{2^d-1}{d}$ and further the following estimate\footnote{\label{fn:bounds}To have a clearer presentation of our main results and their proofs, we refrained from striving for optimal bounds.} for~$W(2^{d-1}-c)$ for a~$c\in [2^{d-2}]$:
\begin{align}\label{eq:boundW(2^d-c)}
    W(2^{d-1}-c)\geq \frac{2^{d}-1}{d}-\frac{c}{d-\log c}
\end{align}

Indeed, if~$A\in 2^{[d-1]}\setminus\mathcal{R}(2^{d-1}-c)$, then there are at least~$2^{d-1-\vert A\vert}$ sets in~$2^{[d-1]}\setminus\mathcal{R}(2^{d-1}-c)$. 
Thus, it follows that for every~$A\in 2^{[d-1]}\setminus\mathcal{R}(2^{d-1}-c)$ we have~$\vert A\vert\geq d-1-\log c$. 
This gives that~$W(2^{d-1})-W(2^{d-1}-c)\leq \frac{c}{d-\log c}$ and thereby~\eqref{eq:boundW(2^d-c)}.

\section{Proof of Theorem~\ref{thm:main}}

As mentioned in Section~\ref{sec:ideaofproof} for proving Theorem~\ref{thm:main} we introduce two ``local'' lemmas. 
The first lemma says that if a family deviates enough from~$\mathcal{R}(m)$, the weight of this family will have a surplus with respect to~$W(m)$\textsuperscript{\ref{fn:bounds}}.

\begin{lemma}\label{lem:weight}
	Let~$d\geq 4$ and~$c\leq 2^d$ be integers. For a hereditary family~$\mathcal H$, with~$\vert\mathcal H \vert\geq 2^d-c$ the following holds. 
	\begin{enumerate}
	\item\label{it:weightineq}$\sum_{H\in\mathcal{H}}\frac{1}{\vert H\vert +1}\geq W(2^d-c) $. 
    \item\label{it:surplusmorevert} If there are at least~$d+1$ non isolated vertices in~$\mathcal{H}$, then
	\begin{align*}
	\sum_{H\in\mathcal{H}}\frac{1}{\vert H\vert +1}\geq
        W(2^d-c) + \frac{1}{6} \,.
    \end{align*}
    \item\label{it:surplusspec}If~$c\in \{2,3\}$ and~$\mathcal{H}\not\cong \mathcal{R}(2^d-c)$, then we have
	\begin{align*}
	\sum_{H\in\mathcal{H}}\frac{1}{\vert H\vert +1}\geq W(2^d-c)+\min\left(\frac{1}{6},\frac{1}{d}\right)\,.
    \end{align*}
    \end{enumerate}
\end{lemma}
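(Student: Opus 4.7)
The plan is to apply Theorem~\ref{thm:genKK} three times, once per part, each time with a carefully chosen monotone non-increasing weight function. For Part~\ref{it:weightineq}, the standard choice $f(k)=\frac{1}{k+1}$ directly gives $\sum_{H\in\mathcal{H}}\frac{1}{|H|+1}\ge W(|\mathcal{H}|)\ge W(2^d-c)$, where the last inequality uses the monotonicity of $W$.

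For Part~\ref{it:surplusmorevert}, the decisive idea is to apply Theorem~\ref{thm:genKK} with a modified weight function that equalises the values at $k=1$ and $k=2$. Let $\tilde f$ be defined by $\tilde f(0)=1$, $\tilde f(1)=\tilde f(2)=\frac{1}{3}$, and $\tilde f(k)=\frac{1}{k+1}$ for $k\ge 3$; then $\tilde f$ is (weakly) monotone non-increasing. Writing $s_\mathcal{F}$ for the number of singletons in a hereditary family $\mathcal{F}$---which by heredity equals its number of non-isolated vertices---Theorem~\ref{thm:genKK} applied with $\tilde f$ rearranges to
\[
\sum_{H\in\mathcal{H}}\frac{1}{|H|+1}\;\ge\;W(|\mathcal{H}|)+\frac{1}{6}\bigl(s_{\mathcal{H}}-s_{\mathcal{R}(|\mathcal{H}|)}\bigr).
\]
By hypothesis $s_{\mathcal{H}}\ge d+1$, while for $|\mathcal{H}|\le 2^d$ one has $\mathcal{R}(|\mathcal{H}|)\subseteq 2^{[d]}$ and hence $s_{\mathcal{R}(|\mathcal{H}|)}\le d$; this yields the required $\frac{1}{6}$ surplus over $W(|\mathcal{H}|)\ge W(2^d-c)$. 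The remaining case $|\mathcal{H}|>2^d$ is handled via Part~\ref{it:weightineq} alone, since $W(|\mathcal{H}|)\ge W(2^d+1)=W(2^d)+\frac{1}{2}\ge W(2^d-c)+\frac{1}{2}$.

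For Part~\ref{it:surplusspec}, I split on the number of non-isolated vertices of $\mathcal{H}$. If at least $d+1$, Part~\ref{it:surplusmorevert} gives the required surplus $\ge\frac{1}{6}\ge\min(\frac{1}{6},\frac{1}{d})$. Otherwise, for $d\ge 4$ and $c\in\{2,3\}$, the size constraint $|\mathcal{H}|\ge 2^d-c$ forces $\mathcal{H}$ to use exactly $d$ non-isolated vertices, so up to isomorphism $\mathcal{H}\subseteq 2^{[d]}$ and $2^{[d]}\setminus\mathcal{H}$ is a hereditary upset of size at most $c$. A short enumeration---each non-trivial element of such an upset must have size at least $d-1$---shows that these upsets are, up to isomorphism, $\emptyset$, $\{[d]\}$, $\{[d]\setminus\{1\},[d]\}$, and $\{[d]\setminus\{1\},[d]\setminus\{2\},[d]\}$, so $\mathcal{H}\cong\mathcal{R}(2^d-k)$ for some $0\le k\le c$. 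The hypothesis $\mathcal{H}\not\cong\mathcal{R}(2^d-c)$ rules out $k=c$, so $|\mathcal{H}|\ge 2^d-c+1$, and a direct computation gives $W(|\mathcal{H}|)-W(2^d-c)\ge\frac{1}{d}$ since the top colex elements of $2^{[d]}$ have sizes $d-1$ or $d$. The main obstacle I anticipate is finding the right modification $\tilde f$ in Part~\ref{it:surplusmorevert}; once the tuning $\tilde f(1)=\tilde f(2)$ is in place, the entire proof reduces to Theorem~\ref{thm:genKK} and a finite enumeration, with no need for compressions or structural case analysis.
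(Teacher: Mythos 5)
Your proof is correct, and for Part~(\ref{it:surplusmorevert}) it takes a genuinely cleaner route than the paper. The paper proves~(\ref{it:surplusmorevert}) by first establishing the layer inequalities $\sum_{i\le s}h_i\ge\sum_{i\le s}r_i$ via Theorem~\ref{thm:genKK} applied to step functions, then ordering the edges of $\mathcal H$ by size, grouping them into blocks $\mathcal H_i$ of sizes $r_i$, bounding each block, and finally observing that the hypothesis on the number of non-isolated vertices forces one edge that the block argument charges at $\tfrac13$ to actually be a singleton worth $\tfrac12$. Your perturbed weight function $\tilde f$ with $\tilde f(1)=\tilde f(2)=\tfrac13$ packages that same observation into a single invocation of Theorem~\ref{thm:genKK}: the identity $\sum_F\tfrac{1}{|F|+1}=\sum_F\tilde f(|F|)+\tfrac16 s_{\mathcal F}$ turns the singleton surplus into an algebraic bookkeeping term, and the estimate $s_{\mathcal H}-s_{\mathcal R(|\mathcal H|)}\ge 1$ (plus the easy $|\mathcal H|>2^d$ case) finishes it. This avoids the explicit enumeration and the minor bookkeeping the paper needs to ensure the relevant index lands inside $\mathcal H_2$. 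For Part~(\ref{it:surplusspec}) the two arguments are essentially the same: you classify the complementary upset $2^{[d]}\setminus\mathcal H$ (size $\le c\in\{2,3\}$ forces all its members to have size $\ge d-1$, so $\mathcal H\cong\mathcal R(2^d-k)$ for some $k<c$), whereas the paper computes $h_i=r_i$ for $i\le d-2$ and deduces $h_{d-1}>r_{d-1}$ directly; both yield the same $\tfrac1d$ gain from an extra set of size $d-1$. Part~(\ref{it:weightineq}) is the same in both.

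One tiny point of caution you may wish to note explicitly: the identity $s_{\mathcal F}=\{\text{non-isolated vertices of }\mathcal F\}$ uses that $\mathcal F$ is hereditary, which you implicitly invoke; and in Part~(\ref{it:surplusspec}) the case ``fewer than $d$ non-isolated vertices'' should be excluded by noting $2^{d-1}<2^d-3\le|\mathcal H|$ for $d\ge 4$. Neither affects correctness.
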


\begin{proof}
	Let~$d$,~$n$,~$c$, and~$\mathcal{H}$ be given as in the statement. The first part follows by applying Theorem~\ref{thm:genKK} with~$f(k)=\frac{1}{k+1}$.
	
	In order to prove part~(\ref{it:surplusmorevert}) and~(\ref{it:surplusspec}) we need some preparation.
	Denote by~$h_i$ and~$r_i$ the number of~$i$-sets in~$\mathcal{H}$ and~$\mathcal{R}(2^d-c)$, respectively. 
	Given~$s\in [d]_0$ set~$g(k)=1$ for~$k\leq s$ and~$g(k)=0$ for~$k>s$. 
	Then Theorem~\ref{thm:genKK} applied with~$f=g$ yields
	\begin{align}\label{eq:numsetsinlayer}
	    \sum_{i\in [s]_0}h_i\geq\sum_{i\in [s]_0}r_i .
	\end{align}
	
	Next, let~$H_1,\dots ,H_{\vert\mathcal{H}\vert}$ be an enumeration of the elements of~$\mathcal{H}$ such that~$\vert H_j\vert\leq \vert H_{j+1}\vert$. 
	Given~$i\in [d-1]$ let~$\varphi(i)$ be the number of edges of size at most $i$ in the family~$\mathcal{R}(2^d-c)$, i.e.,~$\varphi(i)=\sum_{j\in [i]_0} r_j$.
	Let~$\mathcal{H}_0=\{H_1\}=\{\emptyset\}$ and for~$i\in [d-1]$ consider the following set of edges~$\mathcal H_i=\{H_{\varphi(i-1) +1}, \dots, H_{\varphi(i)}\}$ and observe that its size is $r_i$. 
	Inequality (\ref{eq:numsetsinlayer}) implies that for~$H\in\mathcal{H}_i$, where~$i\in [d-1]_0$, we have~$\vert H\vert\leq i$. 
	Thus, 
	\begin{align}\label{eq:weightaswanted}
	\sum_{i\in [d-1]_0}\sum_{H\in\mathcal{H}_i}\frac{1}{\vert H\vert +1}\geq\sum_{i\in [d-1]_0}\frac{r_i}{i+1}=W(2^d-c) \, .
	\end{align}
	If now at least~$d+1$ vertices are contained in edges of~$\mathcal{H}$, then even for~$H_{d+2} \in \mathcal H_2$ it holds that~$\vert H_{d+2}\vert =1$.
	Hence,~(\ref{eq:weightaswanted}) now becomes~$\sum_{H\in\mathcal{H}}\frac{1}{\vert H\vert+1}\geq\frac{1}{2}-\frac{1}{3}+W(2^d-c)$ and (\ref{it:surplusmorevert}) is proved.
	
	For proving~\eqref{it:surplusspec}, let~$c\in\{2,3\}$ and note that if there are at least~$d+1$ non isolated vertices in~$\mathcal H$, then the result follows from~\eqref{it:surplusmorevert}. Thus, assume that there are only~$d$ non isolated vertices in~$\mathcal{H}$.
	Observe that~$r_i=\binom{d}{i}$ for~$i\in [d-2]$,~$r_d=0$ and~$r_{d-1}=d-(c-1)$.
	Hence, due to~\eqref{eq:numsetsinlayer} we have~$h_i=\binom{d}{i}$ for~$i\in [d-2]$ and because of~$\mathcal{H}$ being hereditary and the size of~$\mathcal{H}$, further~$h_{d-1}\geq d-(c-1)$. 
	In fact,~$h_{d-1}>r_{d-1}=d-(c-1)$ has to hold since~$\mathcal{H}\not\cong \mathcal{R}(2^d-c)$. 
	Together with~(\ref{eq:weightaswanted}) the result follows.
\end{proof}

The following is the second local lemma mentioned in the overview. 
Part~\eqref{it:locdeg} states that a family on~$d$ vertices with high minimum degree contains many edges and therefore, considering Lemma~\ref{lem:heredissuf}, this is a local version of Theorem~\ref{thm:main}.
Moreover, Part~\eqref{it:locnumedges} states that if a family has not enough edges, then there are several vertices of low degree.

\begin{lemma}\label{lem:locfamiso}
    Let~$d,c\in\mathds{N}$,~$V$ be a~$d$-set and let~$\mathcal{H}\subseteq 2^V$ be hereditary.
    \begin{enumerate}
        \item\label{it:locnumedges} If~$\vert\mathcal{H}\vert\leq 2^d-c-1$, then~$\deg(v)\leq 2^{d-1}-c-1$ for at least~$d-c$ vertices~$v$.
        \item\label{it:locdeg} If~$d\geq c+1$ and~$\delta(\mathcal{H})\geq 2^{d-1}-c$, then~$\vert\mathcal{H}\vert\geq 2^d-c$.
    \end{enumerate}
\end{lemma}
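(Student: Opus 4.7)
The plan is to prove part~(\ref{it:locnumedges}) by induction on $d$ and to deduce part~(\ref{it:locdeg}) as an immediate corollary. First I would reduce to the saturated case $\vert\mathcal H\vert = 2^d-c-1$: whenever $\vert\mathcal H\vert$ is strictly smaller, one may add a minimal element of $\bar{\mathcal H}:=2^V\setminus\mathcal H$ without breaking heredity, and since adding edges only shrinks the set of vertices of degree at most $2^{d-1}-c-1$, it suffices to handle the saturated case. The base case $d=1$ is immediate, and for $c\geq d$ the statement is vacuous.

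For the inductive step, let $B=\{v\in V:\deg_{\mathcal H}(v)\geq 2^{d-1}-c\}$. If $B=\emptyset$ all $d$ vertices satisfy the conclusion, so fix $v\in B$ and split $\mathcal H$ into the family $\mathcal A$ of edges not containing $v$ and the family $\mathcal B=\{H\setminus\{v\}:v\in H\in\mathcal H\}$, both hereditary on the $(d-1)$-set $V\setminus\{v\}$. Writing $\vert\mathcal A\vert = 2^{d-1}-a$ and $\vert\mathcal B\vert = 2^{d-1}-b$, the identity $\vert\mathcal H\vert = \vert\mathcal A\vert+\vert\mathcal B\vert = 2^d-c-1$ forces $a+b=c+1$; moreover $v\in B$ gives $b\leq c$ and $b=0$ would yield $\mathcal H=2^V$, so $a,b\in\{1,\dots,c\}$.

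Applying the inductive hypothesis to $\mathcal A$ with parameter $a-1$ yields at least $d-a$ vertices $w\in V\setminus\{v\}$ with $\deg_{\mathcal A}(w)\leq 2^{d-2}-a$, and applied to $\mathcal B$ with parameter $b-1$ it yields at least $d-b$ vertices with $\deg_{\mathcal B}(w)\leq 2^{d-2}-b$. Inclusion-exclusion inside $V\setminus\{v\}$ then produces at least $(d-a)+(d-b)-(d-1)=d-c$ vertices belonging to both collections, and every such $w$ satisfies
\begin{equation*}
\deg_{\mathcal H}(w) = \deg_{\mathcal A}(w)+\deg_{\mathcal B}(w)\leq (2^{d-2}-a)+(2^{d-2}-b) = 2^{d-1}-c-1,
\end{equation*}
closing the induction.

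Part~(\ref{it:locdeg}) is then immediate: if $\vert\mathcal H\vert\leq 2^d-c-1$, part~(\ref{it:locnumedges}) produces at least $d-c\geq 1$ vertices of degree at most $2^{d-1}-c-1$, contradicting $\delta(\mathcal H)\geq 2^{d-1}-c$. The only subtle point is verifying $a,b\geq 1$ and checking that the shifted parameters $a-1,b-1$ are admissible for the inductive hypothesis; once that is done, the additive identity $d-(a+b)+1=d-c$ delivers the conclusion.
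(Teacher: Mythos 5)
Your proof is correct, but it takes a genuinely different route from the paper. The paper proves part~(\ref{it:locnumedges}) directly, without induction: it passes to the complement family $\bar{\mathcal H}=\{V\setminus F:F\in 2^V\setminus\mathcal H\}$, which is again hereditary and has at least $c+1$ members, orders those members by non-decreasing size, and notes that because heredity forces singletons to appear before larger sets, the first $c+1$ members of $\bar{\mathcal H}$ touch at most $c$ vertices. The remaining $d-c$ vertices each lie in all $c+1$ corresponding non-edges of $\mathcal H$, giving $\deg(v)\leq 2^{d-1}-(c+1)$ in one step. Your argument instead induces on $d$, saturates $\mathcal H$, splits at a high-degree vertex into the non-link $\mathcal A$ and link $\mathcal B$, and combines the two inductive bounds by inclusion--exclusion on $V\setminus\{v\}$; the identity $\deg_{\mathcal H}(w)=\deg_{\mathcal A}(w)+\deg_{\mathcal B}(w)$ and the arithmetic $a+b=c+1$ then close the induction. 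Both approaches work; the paper's is shorter and avoids the induction bookkeeping, while yours is a more standard link/non-link decomposition that some readers may find more mechanical to verify.

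One point you flag as ``subtle'' deserves more than a flag. You apply the inductive hypothesis with parameters $a-1$ and $b-1$, which lie in $\{0,\dots,c-1\}$, so you need the lemma for the parameter value $0$, which is outside the stated range $c\in\mathds N$ (the paper takes $\mathds N$ to start at $1$). The statement does hold for $c=0$ (a hereditary family of size $2^d-1$ misses exactly $V$, so every vertex has degree exactly $2^{d-1}-1$), and within your own induction this case is absorbed automatically: if $c=0$ then $b\leq c$ forces $b=0$, which you have already ruled out, so one never leaves the $B=\emptyset$ branch. Spelling this out would close the gap; as written you assert $a,b\in\{1,\dots,c\}$ and then quietly pass parameters one smaller, which is exactly the boundary case you should check.
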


\begin{proof}
    
    (\ref{it:locnumedges}): By~$\widebar{\mathcal{H}}$ denote the family~$\{V\setminus F: F\in 2^{V}\setminus\mathcal{H}\}$. 
	The bound on~$\vert \mathcal{H}\vert$ implies that~$c+1\leq\vert\widebar{\mathcal{H}}\vert$ and observe that since~$\mathcal{H}$ is hereditary,~$\widebar{\mathcal{H}}$ is hereditary.
	Consider some ordering~$\widebar{\mathcal{H}}=\{H_1,\dots ,H_{\vert \widebar{\mathcal{H}}\vert}\}$ with~$\vert H_i\vert\leq\vert H_{i+1}\vert$. 
	Note that because~$\widebar{\mathcal{H}}$ is hereditary, we know that if some vertex~$v\in V$ is contained in one of the edges~$H_1,\dots ,H_{j}$, then in fact~$\{v\}=H_i$ for some~$i\in [j]$. 
	Thus, there are~$d-c$ vertices that do not lie in any of~$H_1,\dots ,H_{c+1}$. Note that these vertices lie in at least~$c+1$ sets of~$2^V\setminus \mathcal{H}$ and therefore, for each such~$v$ we have~$\deg_{\mathcal{H}}(v)\leq 2^{d-1}-c-1$.
	
	(\ref{it:locdeg}): Assume for contradiction that~$\vert \mathcal{H}\vert\leq 2^d-c-1$. Then (\ref{it:locnumedges}) gives the contradiction. 
\end{proof}

Now we are ready to prove Theorem~\ref{thm:main}.

\begin{proof}[Proof of Theorem~\ref{thm:main}]
    Let~$n$,~$d$, and~$c$ be given as in the theorem. 
    First note that $$\mathcal{F}_0=\left\{F+(i-1): F\in\mathcal{R}(2^{d}-(c-1)) \text{ and }i\in \left[\frac{n}{d}\right]\right\}\subseteq 2^{[n]}$$ shows that for~$m=\frac{2^d-c}{d}n+1$, we have~$\left(n,m\right)\not\rightarrow \left( n-1, m-(2^{d-1}-c)\right)$.
	
	In an hereditary family on~$n$ vertices with~$m$ edges the existence of a set of size~$n-1$ on which the trace of the family has size at least~$m-(2^{d-1}-c)$ is equivalent to the existence of a vertex with degree at most~$2^{d-1}-c$.
	Therefore, Lemma~\ref{lem:heredissuf} implies that it is sufficient to show that for every hereditary family~$\mathcal{F}$ on~$n$ vertices with minimum degree at least~$2^{d-1}-c+1$ we have~$\vert\mathcal{F}\vert\geq\frac{2^d-c}{d}n+1$.
	Let now~$\mathcal{F}\subseteq 2^V$ be such a hereditary family on some~$n$-set~$V$ in which every vertex has degree at least~$2^{d-1}-c+1$. 
	
	To prove the lower bound on the number of edges, we will define a weight function~$w$ on~$V$ with the property that~$1+\sum_{v\in V}w_{\mathcal{F}}(v)\leq \vert\mathcal{F}\vert$. 
	Subsequently, it will be enough to show that~$\sum_{v\in V}w_{\mathcal{F}}(v)\geq \frac{2^d-c}{d}n$. 
	Indeed, for~$c=1$ the weight function~$\sum_{H\in L_v}\frac{1}{\vert H\vert +1}$ together with Lemma~\ref{lem:weight} provide this, so from now on we assume~$c\geq 2$. 
	Note however, that for this uniform weight and~$c$ large, in~$\mathcal{F}_0$ there are vertices with weight below and above~$\frac{2^d-c}{d}$.
	As mentioned in the overview, we overcome this difficulty by using non-uniform weights and by bounding the average weight of sets of vertices instead of bounding the weight of every single vertex.
    
    To that aim, we will in the following consider a partition of~$V$. 
	Let us call a vertex~$v\in V$ \textit{light} if~$\vert V_v\vert=d$. 
	Further, let~$\mathcal{L}$ be a maximum set of light vertices such that~$V_v\cap V_{v'}=\emptyset$ for all~$v,v'\in\mathcal{L}$ and call the sets~$V_v$ with~$v\in \mathcal{L}$ \emph{clusters}. 
	Later, the weight of a vertex will be defined depending on how it relates to to these clusters. 
	Moreover, call the vertices~$u\in V\setminus\bigcup_{v\in \mathcal{L}}V_v$ with~$\vert V_u\vert >d$ \textit{heavy} vertices and let~$\mathfrak{H}$ be the set of all heavy vertices. 
	The vertices in~$\mathcal{L}$ will be distinguished further into two different types~$\mathcal{L}_1$ and~$\mathcal{L}_2$ as follows. 
	Let~$\mathcal{L}_1$ be the set of those vertices~$v\in\mathcal{L}$ for which every vertex in~$V_v$ is only contained in edges of~$2^{V_v}$, 
	that is
    $$\mathcal L_1 = \{v\in \mathcal L\colon \text{there is no }e\in \mathcal F\setminus 2^{V_v}  \text{ with  }e\cap V_v\neq \emptyset\}. $$
	Furthermore, let~$\mathcal{L}_2$ be the set of those vertices~$v\in\mathcal{L}$ for which there exists an~$x\in V_v$ that is contained in an edge of~$\mathcal{F}\setminus 2^{V_v}$, in other words,
	\begin{align}\label{def:lighttypes}
	    \mathcal L_2 &= \{v\in \mathcal L\colon \text{there is an }e\in \mathcal F\setminus 2^{V_v}  \text{ with  }e\cap V_v\neq \emptyset \}.
	\end{align}

	Note that we have~$\mathcal{L}=\mathcal{L}_1\dcup\mathcal{L}_2$. Lastly, we collect the remaining vertices in the set~$\widebar{\mathcal{L}}=V\setminus (\mathfrak{H}\cup\bigcup_{v\in\mathcal{L}}V_v)$. 
	Thus, we have~$V=\mathfrak{H}\dcup\bigcup_{v\in\mathcal{L}_1} V_v\dcup\bigcup_{v\in\mathcal{L}_2} V_v \dcup \widebar{\mathcal{L}}$.
    
    Next, for each of the partition classes~$\mathfrak{H}$,~$\bigcup_{v\in\mathcal{L}_1}V_v$,~$\bigcup_{v\in \mathcal{L}_2}V_v$, and~$\widebar{\mathcal{L}}$ the weights will be defined and we will show that the average weight in each partition class is bounded from below by~$\frac{2^d-c}{d}$.
    
	Assign the uniform weight~$w_{\mathcal{F}}(u)=\sum_{H\in L_u}\frac{1}{\vert H\vert +1}$ to every heavy vertex~$u\in\mathfrak{H}$. 
	This definition and (\ref{it:surplusmorevert}) from Lemma~\ref{lem:weight} give that every heavy vertex has weight at least 
	\begin{align}\label{eq:weightestimateheavy0}
	\frac{1}{6}+W(2^{d-1}-c+1)\geq \frac{1}{6}+\frac{2^d-1}{d}-\frac{c-1}{d-\log (c-1)}\geq \frac{2^d-c}{d} \, ,
	\end{align}
	where we used the bound (\ref{eq:boundW(2^d-c)}) for the first inequality and~$d\geq 4c$ and~$\log x\leq \frac{2}{3}x$ for~$x\geq 1$ for the second (recall that we can assume~$c\geq 2$).
	
	Given~$v\in\mathcal{L}_1$, we have that~$\mathcal{F}[V_v]$ is a family on~$d$ vertices with minimum degree at least~$2^{d-1}-c+1$. 
	Thus, from Lemma~\ref{lem:locfamiso}~(\ref{it:locdeg}) (with~$c-1$ here in place of~$c$ there) it follows that~$\vert\mathcal{F}[V_v]\vert\geq 2^d-c+1$. 
	Since summing the uniform vertex weights of all vertices of a family amounts to the number of non-empty edges in that family, 
	assigning the uniform weight~$w_{\mathcal{F}}(x)=\sum_{H\in L_x}\frac{1}{\vert H\vert +1}$ to every~$x\in V_v$ yields
	\begin{align}\label{eq:weightestimatetype10}
	\frac{1}{d}\sum_{x\in V_v}w_{\mathcal{F}}(x)= \frac{\vert\mathcal{F}[V_v]\setminus\{\emptyset\}\vert}{d}\geq \frac{2^d-c}{d} \, .
	\end{align}
    
    Given~$v\in\mathcal{L}_2$, the idea is that the vertices in~$V_v$ already have a relatively large uniform weight just taking into account the edges on~$V_v$. 
	Thus, they only need a smaller proportion of the weight of an edge that includes vertices outside of~$V_v$. 
	More precisely, we assign the weight 
	$$w_{\mathcal{F}}(x)=\sum_{H\in L_x}\frac{1}{\vert H\vert +1}-\left\vert V_x\setminus V_v\right\vert\left(\frac{1}{2}-\frac{c-1}{d-c}\right)$$
	to every vertex~$x\in V_v$. 
	This definition can be understood as vertices in~$V_v$ basically having the uniform weight but then renouncing part of their uniform share of $2$-uniform edges that cross from the inside of a cluster to the outside.
	Later, these crossing edges will contribute more than their uniform share to the outside vertex.
	
	Of course, if~$\vert\mathcal{F}[V_v]\vert\geq 2^d-c+1$, then again the bound~(\ref{eq:weightestimatetype10}) follows for~$v$ directly by double counting and thus, we may assume that~$\vert\mathcal{F}[V_v]\vert\leq 2^d-c$. 
	Define the set~$C$ as the set of vertices~$x\in V_v$ for which there exists some~$F_x$ with~$x\in F_x\in\mathcal{F}\setminus 2^{V_v}$.
	Note that in fact, since~$\mathcal{F}$ is hereditary, we may assume~$\vert F_x\vert =2$. 
	Considering the minimum degree condition in~$\mathcal F$ and applying Lemma~\ref{lem:locfamiso}~(\ref{it:locnumedges}) to~$\mathcal{F}[V_v]$ (with~$c-1$ here instead of~$c$ there) it follows that 
	\begin{align}\label{eq:Cbound}
	    \vert C\vert\geq d-c+1 \, .
	\end{align}
	Moreover, the minimum degree of~$\mathcal{F}$ implies~$d(v)\geq 2^{d-1}-c+1$ and hence,~$\mathcal{F}$ being hereditary gives that~$\vert 2^{V_v}\setminus\mathcal{F}\vert\leq 2(c-1)$. 
	Therefore, double counting the non-empty edges in~$\mathcal{F}[V_v]$ yields
	\begin{align}\label{eq:weightestimatetype2inner0}
	\vert \mathcal{F}[V_v]\setminus\{\emptyset\}\vert=
	\sum_{x\in V_v}\sum_{H\in L_x\cap 2^{V_v}}\frac{1}{\vert H\vert +1}\geq 2^d-2c+1 \, .
	\end{align}
	Now, observe that the definition of the weight together with~\eqref{eq:weightestimatetype2inner0} and~\eqref{eq:Cbound} give
	\begin{align}\label{eq:weightestimatetype20}
	\frac{1}{d}\sum_{x\in V_v}w_{\mathcal{F}}(x) 
	&\geq 
	\frac{1}{d}\left(\vert \mathcal F[V_v] \setminus \{\emptyset\}\vert+\vert  C\vert\frac{c-1}{d-c}\right) \nonumber \\
	&\geq
	\frac{1}{d}\left(2^d-2c+1+(d-c+1)\frac{c-1}{d-c}\right)  \nonumber\\
	&\geq 
	\frac{2^d-c}{d}\, . 
	\end{align}
    
    Lastly consider vertices from~$\widebar{\mathcal{L}}$. 
	Recall that in particular, these vertices are light and could potentially have a too low weight if the uniform weight would be used. 
	Note that by the maximality of~$\mathcal{L}$, for every vertex~$a\in\widebar{\mathcal{L}}$ we can pick a~$v(a)\in\mathcal{L}_2$ such that there exists an edge containing~$a$ and a vertex of~$V_{v(a)}$. 
	Since the vertices in~$\bigcup_{v\in\mathcal{L}_2}V_v$ renounced their full share of some of those edges, the vertices in~$\widebar{\mathcal{L}}$ can be given a larger fraction. 
	To be precise, the weight for~$a\in\widebar{\mathcal{L}}$ is defined as
	$$w_{\mathcal{F}}(a)=\sum_{H\in L_a}\frac{1}{\vert H\vert +1}+\left\vert V_a\cap V_{v(a)}\right\vert \left(\frac{1}{2}-\frac{c-1}{d-c}\right) \, .$$
	Lemma~\ref{lem:weight} (\ref{it:weightineq}) yields that
	\begin{align}\label{eq:weightestimatetype30}
	w_{\mathcal{F}}(a)\geq W(2^{d-1}-c+1)+\frac{1}{2}-\frac{c-1}{d-c}\geq W(2^{d-1}-c+1)+\frac{1}{6}\geq \frac{2^d-c}{d} \, ,
	\end{align}
	where the second inequality follows from~$d\geq 4c$ and the third follows as in (\ref{eq:weightestimateheavy0}).
	Observe that the definition of~$w_{\mathcal{F}}$ implies~$\sum_{x\in V}w_{\mathcal{F}}(x)\leq 1+ \vert\mathcal{F}\vert$ because the left-hand side counts every edge of~$\mathcal{F}$ apart from the empty set at most once. 
	Since (\ref{eq:weightestimateheavy0}), (\ref{eq:weightestimatetype10}), (\ref{eq:weightestimatetype20}), and (\ref{eq:weightestimatetype30}) say that the average weight per vertex in~$\mathcal{F}$ is at least~$\frac{2^d-c}{d}$, the proof is complete. \end{proof}

\section{Proof of Theorem~\ref{thm:secmain}}

This section is dedicated to the proof of Theorem~\ref{thm:secmain}.
The proof is very similar to the proof of the main theorem just with some adaptions to obtain more precise bounds at certain points.
Hence, we will omit some details that already appeared in the last section. 

\begin{proof}[Proof of Theorem~\ref{thm:secmain}]
    
    Let~$c\in\{3,4\}$ and note that the cases~$d=3$ and~$4$ have been solved before, see~\cite{FranklWatanabe} and~\cite{Watanabe}, so assume~$d\geq 5$.
    Firstly, the family~$\mathcal{F}_0$ from the proof of Theorem~\ref{thm:main} shows that for~$m=\frac{n}{d}(2^{d}-c)+1$, we have~$\left(n,m\right)\not\rightarrow \left( n-1, m-(2^{d-1}-c)\right)$.

    Let now~$\mathcal{F}\subseteq 2^V$ be a hereditary family on some~$n$-set~$V$ in which every vertex has degree at least~$2^{d-1}-c+1$.
    In the following we will show that~$\vert \mathcal{F}\vert\geq \left(2^d-c\right)\frac{n}{d}+1$.
    
    To gain more precision later, this time we call a vertex~$v\in V$ \textit{light} if~$L_v\cong \mathcal{R}(2^{d-1}-(c-1))$. 
    Again, let~$\mathcal{L}$ be a maximum set of light vertices such that~$V_v\cap V_{v'}=\emptyset$ for all~$v,v'\in\mathcal{L}$. 
    Call the vertices~$u\in V\setminus\bigcup_{v\in \mathcal{L}}V_v$ with~$L_u\not\cong \mathcal{R}(2^{d-1}-(c-1))$ \textit{heavy} vertices. 
    The sets~$\mathcal{L}_i$,~$\mathfrak{H}$,~$\widebar{\mathcal{L}}$ are defined similarly as in the proof of Theorem~\ref{thm:main}, just according to the different definitions of light and heavy vertices here.
    
    Again we assign the uniform weight to every heavy vertex of~$\mathcal{F}$. 
    Note that then, due to Lemma~\ref{lem:weight}~(\ref{it:surplusspec}) and the structure of~$\mathcal{R}(2^{d-1}-c)$ for~$c\leq 4$, every heavy vertex has weight at least 
    \begin{align}\label{eq:weightestimateheavy1}
        \min\left(\frac{1}{6},\frac{1}{d}\right)+W(2^{d-1}-(c-1))\geq \min\left(\frac{1}{6},\frac{1}{d}\right)+\frac{2^d-1}{d}-\frac{(c-1)d-1}{(d-1)d}\geq \frac{2^d-c}{d} \, .
    \end{align} 
    
    For~$v\in\mathcal{L}_1$ and~$x\in V_v$ the weight is again defined as the uniform weight and as in the proof of Theorem~\ref{thm:main}, we obtain
    \begin{align}\label{eq:weightestimatetype11}
        \frac{1}{d}\sum_{x\in V_v}w_{\mathcal{F}}(x)\geq \frac{2^d-c}{d} \, .
    \end{align}
    
    To write the next weight definitions in a compact way, we define the following set 
    \begin{align*}
        \mathcal{S}&=\big\{H\in\mathcal{F}:\vert H\vert =3\text{ and }H\cap\bigcup_{v\in \mathcal{L}_2}V_v,H\cap\widebar{\mathcal{L}}\neq\emptyset\big\}
    \end{align*}
    
    Note that~$\mathcal{S}$ is the set of those edges of size~$3$ in~$\mathcal{F}$ crossing from the inside of some~$V_v$ with~$v\in\mathcal{L}_2$ to its outside and contain a vertex from~$\widebar{\mathcal{L}}$.
    For~$v\in\mathcal{L}_2$ and a vertex~$x\in V_v$, assign the weight~$w_{\mathcal{F}}(x)=\sum_{H\in L_x}\frac{1}{\vert H\vert +1}-\frac{1}{9}\vert \{H\in L_x:H\cup\{x\}\in\mathcal{S}\}\vert$. 
     
    \begin{claim}\label{cl:weightL2}
    	For~$v\in\mathcal{L}_2$ we have~$\frac{1}{d}\sum_{x\in V_v}w_{\mathcal{F}}(x)\geq\frac{2^d-c}{d}$.
    \end{claim}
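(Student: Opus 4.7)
My plan: Let $\alpha := |\mathcal F[V_v]\setminus\{\emptyset\}|$ and call an edge $F\in\mathcal F$ \emph{crossing} if $F\cap V_v\neq\emptyset$ and $F\not\subseteq V_v$. Double counting gives
\[
\sum_{x\in V_v}w_{\mathcal F}(x)\ =\ \alpha\ +\ \sum_{F\text{ crossing}}\frac{|F\cap V_v|}{|F|}\ -\ \frac{1}{9}\sum_{e\in\mathcal S}|e\cap V_v|.
\]
The two crossing sums combine to a non-negative quantity, because each $e\in\mathcal S$ (size $3$) contributes at least $\frac{|e\cap V_v|}{3}-\frac{|e\cap V_v|}{9}=\frac{2|e\cap V_v|}{9}\ge 0$. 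If $\alpha\ge 2^d-c$ the desired inequality is immediate, so I may assume $|\mathcal F[V_v]|\le 2^d-c$. Setting $k:=2^d-|\mathcal F[V_v]|$, the fact that $v$ is light together with hereditary forces $|\mathcal F[V_v]|\ge 2\deg_{\mathcal F}(v)=2^d-2c+2$, so $k\in\{c,\dots,2c-2\}$. The claim then reduces to showing that the crossing contribution is at least $k-c+1$.

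For this I would apply Lemma~\ref{lem:locfamiso}~(\ref{it:locnumedges}) for each $c'\in\{c-1,\dots,k-1\}$, producing a nested family of low-degree vertex sets: for every such $c'$, at least $d-c'$ vertices $x\in V_v$ satisfy $\deg_{\mathcal F[V_v]}(x)\le 2^{d-1}-c'-1$, and hence carry at least $c'-c+2$ crossing edges. By hereditary each such $x$ contributes at least $\tfrac12$ via a size-$2$ crossing, and vertices forced to have several crossings through a shared outside neighbour contribute an additional surplus: a colex-minimising analysis of the external part of the link, together with the fact that the $\tfrac19$ subtraction on a size-$3$ edge in $\mathcal S$ still leaves $\tfrac29>0$ per inside vertex, yields per-vertex lower bounds of the form $\tfrac12+\tfrac29+\ldots$ for vertices with $\ge 2,\,\ge 3,\,\ldots$ crossings.

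The main obstacle is that a naive sum of these per-vertex lower bounds is only barely sufficient in certain apparently extremal configurations. Two complementary observations save the day. First, the identity $\sum_{x\in V_v}\deg_{\mathcal F[V_v]}(x)=\sum_{F\in\mathcal F[V_v]}|F|$, together with a description of the allowable ``removed up-sets'' in $2^{V_v}$, rules out the most pessimistic degree sequences—it forces either the Lemma's bound $|C'_{c-1}|\ge d-c+1$ to be strict or the low-degree vertices to carry strictly more crossings than the minimum. Secondly, hereditary \emph{propagation}: any $\mathcal S$-edge $\{x,y,t\}$ with $t\in V_v$ forces $\{t,y\}\in\mathcal F$, automatically increasing the crossing count of $t$, which in turn boosts $\mathrm{Con}(t)$. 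Combining both observations and running through the finitely many pairs $(c,k)\in\{3,4\}\times\{c,\dots,2c-2\}$ secures the bound $X-Y\ge k-c+1$ in every case, completing the claim.
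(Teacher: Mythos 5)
Your setup (double counting, the parameter $k=2^d-\vert\mathcal F[V_v]\vert$, reducing to a lower bound of $k-c+1$ on the crossing contribution) matches the paper's approach. The bound $k\le 2c-2$ via $\vert\mathcal F[V_v]\vert\geq 2\deg_{\mathcal F}(v)$ is correct and reproduces the paper's case split $\vert 2^{V_v}\setminus\mathcal F\vert\in\{c,\dots,2c-2\}$.

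The gap is in the crossing count. Iterating Lemma~\ref{lem:locfamiso}~\eqref{it:locnumedges} gives at least $d-c'$ vertices of $\mathcal F[V_v]$-degree at most $2^{d-1}-c'-1$, and for the critical value $c'=k-1$ this is only $d-k+1$ vertices with $\geq k-c+1$ crossings. That is one fewer than what the paper extracts from the structure of the removed up-set. Concretely, for $c=3$, $k=4$, $d=5$ your Lemma-based count gives $d-3=2$ vertices with $\geq 2$ crossings and one further vertex with $\geq 1$, so the best worst-case bound is $2\bigl(\tfrac12+\tfrac29\bigr)+\tfrac12=\tfrac{35}{18}<2$, which is strictly less than the required $k-c+1=2$. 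The paper instead determines that the four removed sets are exactly $V_v$, $V_v\setminus\{v\}$, $A$, $A\setminus\{v\}$ for some $A\in V_v^{(d-1)}$ containing $v$ (using $L_v\cong\mathcal R(2^{d-1}-2)$ and that $2^{V_v}\setminus\mathcal F$ is an up-set), and hence that all $d-2$ vertices of $A\setminus\{v\}$ have two crossings; this is precisely the extra vertex that makes the arithmetic close. The same deficit recurs at $c=4$, $k=5$ and $k=6$. Your ``colex-minimising analysis of the external part of the link'' and ``hereditary propagation'' are gestures toward the right fix, but they are not carried out, and the hereditary-propagation remark you describe (an $\mathcal S$-edge $\{x,y,t\}$ forces $\{t,y\}\in\mathcal F$) is not the observation the paper actually needs: the paper instead uses that if $F_x'\in\mathcal S$ then its $\widebar{\mathcal L}$-vertex $a$ satisfies $L_a\cong\mathcal R(2^{d-1}-(c-1))$, which for $d\geq 5$ forces at least $d-2\geq 3$ many $\mathcal S$-edges through $x$, so the $\mathcal S$-penalty is more than compensated. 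To complete your proof you would have to determine the possible removed up-sets for each $k$ (as the paper does) and incorporate the structure of links of $\widebar{\mathcal L}$-vertices, at which point the argument becomes the paper's case analysis.
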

	We postpone the proof of this claim to the end of the section and first finish the proof of Theorem~\ref{thm:secmain} using the claim.
	
	For a vertex define the weight~$a\in\widebar{\mathcal{L}}$ as~$w_{\mathcal{F}}(a)=\sum_{H\in L_a}\frac{1}{\vert H\vert +1}+\frac{1}{18}\vert \{H\in L_a:H\cup\{a\}\in\mathcal{S}\}\vert \, .$ 
	Note that by the maximality of~$\mathcal{L}$, there exists a~$v(a)\in\mathcal{L}_2$ such that there are an edge~$F$ and a vertex~$x_a\in V_{v(a)}$ with $a,x_a\in F$.
	In fact, it is easy to check that since~$L_a\cong\mathcal{R}(2^{d-1}-(c-1))$, the number of~$2$-sets in~$L_a$ that contain~$x_a$ is at least~$d-2\geq 3$. 
	Thus, Lemma~\ref{lem:weight} (\ref{it:weightineq}) and the definition of the weight yield
	\begin{align}\label{eq:weightestimatetype31}
		w_{\mathcal{F}}(a)\geq W(2^d-(c-1))+\frac{d-2}{18}\geq
		W(2^d-(c-1))+\frac{1}{6} \geq
		\frac{2^d-c}{d} \, .
	\end{align}
	
	Now observe that the definition of~$w_{\mathcal{F}}$ implies~$\sum_{x\in V}w_{\mathcal{F}}(x)\leq 1+ \vert\mathcal{F}\vert$ because the left-hand side counts every edge of~$\mathcal{F}$ apart from the empty set at most once. 
	In particular, for~$H\in\mathcal{S}$ there are at least one~$x\in H\cap\bigcup_{v\in\mathcal{L}_2}V_v$ and at most two~$a,a'\in H\cap\widebar{\mathcal{L}}$.
	Thus,~$H$ contributes at most~$1$ to~$\sum_{x\in V}w_{\mathcal{F}}(x)$.
	
	Since~(\ref{eq:weightestimateheavy1}),~(\ref{eq:weightestimatetype11}), Claim~\ref{cl:weightL2}, and~(\ref{eq:weightestimatetype31}) say that the average weight per vertex in~$\mathcal{F}$ is at least~$\frac{2^d-c}{d}$, the proof is complete.
\end{proof}

\begin{proof}[Proof of Claim~\ref{cl:weightL2}]
	Here, we will differ slightly depending on the value of~$c$.
	
	Case~$c=3$: If~$\delta (\mathcal{F}[V_v])\geq 2^{d-1}-2$, then~(\ref{eq:weightestimatetype11}) holds for~$v$ as well and so we may assume~$\delta (\mathcal{F}[V_v])< 2^{d-1}-2$ and thereby~$\vert 2^{V_v}\setminus\mathcal{F}\vert\geq 3$. 
	On the other hand, since~$d(v)\geq 2^{d-1}-2$ and~$\mathcal{F}$ is hereditary,~$\vert 2^{V_v}\setminus\mathcal{F}\vert\leq 4$. 
	So we can assume that~$\vert 2^{V_v}\setminus\mathcal{F}\vert\in \{3,4\}$. 
	If~$\vert 2^{V_v}\setminus\mathcal{F}\vert =3$, then~$d(v)\geq 2^{d-1}-2$ and~$\mathcal{F}$ being hereditary imply that the sets in~$2^{V_v}\setminus\mathcal{F}$ are~$V_v$,~$V_v\setminus \{v\}$, and some~$A\in (V_v)^{(d-1)}$ with~$v\in A$. 
	Thus, each vertex~$x\in A\smallsetminus\{v\}$ lies in all three sets of~$2^{V_v}\setminus \mathcal{F}$, and so there has to be an~$F_x\in L_x\cap (V\setminus V_v)^{(1)}$ because of the minimum degree of~$\mathcal{F}$. 
	Thus, the definition of the weight and double counting the non-empty edges in~$\mathcal{F}[V_v]$ implies
	$$ \sum_{x\in V_v}w_{\mathcal{F}}(x)\geq  \vert\mathcal F [V_v]\smallsetminus\{\emptyset\}\vert + \frac{\vert A\smallsetminus\{v\}\vert}{2} \geq   2^d-4+\frac{d-2}{2}\geq 2^d-3.$$
	
	Similarly, if~$\vert 2^{V_v}\setminus\mathcal{F}\vert =4$, then the sets in~$2^{V_v}\setminus\mathcal{F}$ are~$V_v$,~$V_v\setminus \{v\}$, some~$A\in (V_v)^{(d-1)}$ with~$v\in A$, and~$A\setminus\{v\}$. 
	Hence, there are~$d-2$ vertices~$x$ (namely, the vertices in~$A\setminus\{v\}$) for which there has to be an~$F_x\in L_x\cap (V\setminus V_v)^{(1)}$ and at least one further~$F_x'\in L_x$ with~$F_x'\cap (V\setminus V_v)\neq\emptyset$ and~$\vert F_x'\vert\leq 2$.
	Noting that each~$F_x$ contributes~$1/2$ to~$\sum_{x\in V_v}w_{\mathcal{F}}(x)$ and each~$F_x'$ at least~$1/3-1/9=2/9$, we obtain in the usual way 
	$$ \sum_{x\in V_v}w_{\mathcal{F}}(x)\geq  2^d-5+\frac{d-2}{2}+\frac{2(d-2)}{9}\geq 2^d-3  $$ and thereby the claim if~$c=3$.
	
	Case~$c=4$: In a similar way as in the beginning of the case~$c=3$, we observe that we may assume~$\vert 2^{V_v}\setminus\mathcal{F}\vert\in \{4,5,6\}$. 
	Further observe that if~$\vert 2^{V_v}\setminus\mathcal{F}\vert =4$, then since~$d(v)=2^{d-1}-3$, the sets in~$2^{V_v}\setminus\mathcal{F}$ are~$V_v$,~$V_v\setminus\{v\}$,~$A$, and~$B$ for some distinct~$A,B\in V_v^{(d-1)}$ which both contain~$v$. 
	Thus, there are at least~$d-3$ vertices (namely those in~$A\cap B\setminus\{v\}$) that lie in four sets of~$2^{V_v}\setminus \mathcal{F}$. 
	Since for any such vertex~$x$ there has to be an~$F_x\in L_x\cap (V\setminus V_v)^{(1)}$, we get~$\sum_{x\in V_v}w_{\mathcal{F}}(x)\geq 2^d-5+\frac{d-3}{2}\geq 2^d-4$. 
	
	Similarly, if~$\vert 2^{V_v}\setminus\mathcal{F}\vert =5$, the sets in~$2^{V_v}\setminus\mathcal{F}$ are~$V_v$,~$V_v\setminus\{v\}$,~$A$,~$B$,~and~$A\setminus\{v\}$ for some distinct~$A,B\in V_v^{(d-1)}$ which both contain~$v$. 
	Hence, for the~$d-3$ vertices~$x\in A\cap B\smallsetminus\{v\}$ there have to be an~$F_x\in L_x\cap (V\setminus V_v)^{(1)}$ and at least one further~$F_x'\in L_x$ with~$F_x'\cap (V\setminus V_v)\neq\emptyset$ and~$\vert F_x'\vert\leq 2$. 
	In addition, for the one vertex~$x\in A\setminus B$ there has to be an~$F_x\in L_x\cap (V\setminus V_v)^{(1)}$.
	For a vertex~$x\in A\cap B\setminus\{v\}$ we observe the following. If~$F_x'\not\in\mathcal{S}$, then~$F_x'$ contributes at least~$1/3$ to~$\sum_{x\in V_v}w_{\mathcal{F}}(x)$. 
	On the other hand, if~$F_x'\in\mathcal{S}$, then there is some~$a\in\widebar{\mathcal{L}}$ with~$a\in F_x'$. 
	Since for any~$a\in\widebar{\mathcal{L}}$ we have~$L_a\cong\mathcal{R}(2^{d-1}-3)$ (and~$d\geq 5$), the number of~$2$-sets in~$L_a$ which contain~$x$ is at least~$d-2\geq 3$.
	So in this case the edges in~$\{H\in L_x:H\cup\{x\}\in\mathcal{S}\}$ contribute at least~$\frac{2}{9}\cdot 3=2/3$. In either case, we derive $$\sum_{x\in V_v}w_{\mathcal{F}}(x)\geq 2^d-6+\frac{d-2}{2}+\frac{d-3}{3}\geq 2^d-4 \, .$$
	
	Lastly, if~$\vert 2^{V_v}\setminus\mathcal{F}\vert =6$, then the sets in~$2^{V_v}\setminus\mathcal{F}$ are~$V_v$,~$V_v\setminus\{v\}$,~$A$,~$B$,~$A\setminus\{v\}$, and~$B\setminus\{v\}$ for some distinct~$A,B\in V_v^{(d-1)}$ which both contain~$v$. 
	Thus, for the~$d-3$ vertices~$x\in A\cap B\setminus\{v\}$ there is an~$F_x\in L_x\cap (V\setminus V_v)^{(1)}$ and at least two further~$F_x^i\in L_x$ with~$F_x^i\cap (V\setminus V_v)\neq\emptyset$ and~$\vert F_x^i\vert\leq 2$,~$i\in [2]$. 
	In addition, there are two further vertices~$x\in A\triangle B$ for which there is at least one~$F_x\in L_x\cap (V\setminus V_v)^{(1)}$.  
	For a vertex~$x\in A\cap B\setminus\{v\}$ we observe the following. If~$F_x^i\not\in\mathcal{S}$ for~$i=1,2$, then these two edges together contribute at least~$2/3$ to~$\sum_{x\in V_v}w_{\mathcal{F}}(x)$. 
	If~$F_x^i\in\mathcal{S}$ for some~$i\in\{1,2\}$, then the edges in~$\{H\in L_x:H\cup\{x\}\in\mathcal{S}\}$ contribute at least~$2/3$ as noted above.
	Therefore the definition of the weight entails $$\sum_{x\in V_v}w_{\mathcal{F}}(x)\geq 2^d-7+\frac{d-1}{2}+\frac{2(d-3)}{3}\geq 2^d-4 $$ and thereby the claim is proved if~$c=4$. \end{proof}
    
\section{Further Remarks and Open Problems} \label{sec:conrem}

Consider $m(s)$ to be the following limit introduced in~\cite{FranklWatanabe}
$$m(s) :=\lim_{n\rightarrow \infty} \frac {m(n,s)}{n}\,.$$ 
It is not difficult to check that~$m(s)$ is well-defined (see~\cite{FranklWatanabe}). 
Rephrased by means of this definition, Theorem~\ref{thm:main} implies that for~$c\leq \frac{d}{4}$ we have that
\begin{align}
    m(2^{d-1}-c) = \frac{2^{d}-c}{d}\, . \label{eq:maintheo}
\end{align} 

The first open problem we would like to mention concerns finding a sharp relation between~$d$ and~$c$ such that~(\ref{eq:maintheo}) holds.
More precisely, finding the maximum integer~$c_0(d)$ such that the equality~(\ref{eq:maintheo}) holds for every~$c\leq c_0$. 
In view of Theorem~\ref{thm:main} we have that~$c_0(d)\geq \lfloor \frac{d}{4}\rfloor$, and below we will give a construction that proves that~$c_0(d)\leq d$ for~$d\geq 5$.

Let~$\mathcal F\subseteq 2^V$ with~$\vert V\vert =n$ and~$d$ be a positive integer such that~$d|n$. 
We say that~$\mathcal F$ is~\textit{$d$-local} if there exists a partition of~$V$ into sets of size~$d$ such that every~$F\in \mathcal F$ is a subset of one of the sets of the partition. 
Observe that the extremal construction presented in the proof of Theorem~\ref{thm:main} is a~$d$-local hypergraph with minimum degree~$2^{d-1}-c+1$ and with~$m(n,2^{d-1}-c)+1$ edges. 
That construction can be generalised in the following way.

Take~$d\geq 5$ and~$c\in [2^{d-2}]$ and set~$s=2^{d-1}-c$, for simplicity let~$d|n$.
By definition of~$m(\cdot,\cdot)$, there is a family on~$d$ vertices with~$m(d,s)+1$ edges such that all vertices have degree at least~$s+1$. 
Take~$n/d$ vertex disjoint copies of such a family. 
It is clear that in the resulting family all vertices have degree at least~$s+1$
and the number of edges is~$m(d, s)\frac{n}{d}+1$. 
This family minimises the number of edges for~$d$-local families with minimum degree at least~$s+1$ and gives the following general upper bound on~$m(2^{d-1}-c)$
\begin{align}\label{ineq:constr}
    m(2^{d-1}-c) \leq \frac{m(d, 2^{d-1}-c)}{d}\,. 
\end{align}

Moreover, we observe that for $c=d+1$ we have that $m(d, 2^{d-1}-(d+1)) < 2^d - (d+1)$. 
To see this, consider the family~$\mathcal F \subseteq 2^{[d]}$ containing all sets with at most~$d-2$ vertices. 
Then~$\mathcal F$ has $2^d - (d+1)$ edges and minimum degree $2^{d-1}-d > 2^{d-1}-c$.
Thus, from~(\ref{ineq:constr}) it follows that $$m(2^{d-1}-(d+1)) \leq  \frac{2^{d}-(d+2)}{d}\,.$$ 
This means~(\ref{eq:maintheo}) does not hold for $c=d+1$, and hence~$c_0(d) \leq d$.

Note that this construction is also~$d$-local. An interesting problem is to find the values of~$c$ for which there are no~$d$-local extremal families.

\begin{problem}\label{prob:dlocal}
Given a positive integer~$d\geq 2$, find the minimal~$c_{\star}(d)\in [1,2^{d-2}]$ such that for all~$c \geq c_{\star}$ we have
\begin{align*}
m(2^{d-1}-c) < \frac{m(d, 2^{d-1}-c)}{d}\,.
\end{align*}
\end{problem}

A solution to this problem would give an insight into the structural behaviour of the extremal families: 
For~$c\geq c_{\star}$ and large~$n$ (possibly satisfying certain divisibility conditions) there is no~$d$-local extremal family for~$m(n,2^{d-1}-c)$.
Note that the results in~\cite{Frankl, Watanabe, FranklWatanabe} solved Problem~\ref{prob:dlocal} for $d\leq 4$.


In the following, given a vertex set of size~$n$ we describe a non~$d$-local family that has less edges than any possible~$d$-local family with the same minimum degree. 
More precisely, the construction below yields that, given~$d\geq 5$ and~$c=d$, we have 
\begin{align}\label{ineq:constr2}
    m(2^{d-1} - d) \leq \frac{2^d-d-\frac{1}{2}}{d} < \frac{m(d, 2^{d-1}-d)}{d} \,.
\end{align} 

\begin{constr}\label{constr:nonloc}
\normalfont{
Let $d\geq 5$ and $k$ a positive integer, set~$n=2dk$.
Take $V$ to be a set of $n$ vertices.
Consider~$U_1, \dots ,U_{2k}$ to be a partition of~$V$ into sets of size~$d$, 
and for every set~$U_i$ arbitrarily pick a vertex~$x_i\in U_i$. 
Define \begin{align*}
    \mathcal G 
    &=
    \{S\subseteq V \colon \text{ there is an~$i$ such that }S\subseteq U_i \text{ and } |S|\leq d-2\} \\
    \mathcal H
    &=
    \{U_i\setminus\{x_i\}\colon \text{for  } i\in \{1,2, \dots, 2k \}\}\\ 
    \mathcal I
    &=
    \{ \{x_i, x_{i+1}\} \colon \text{ for } i\in\{1,3, 5, \dots, 2k-1 \}\}\, .
\end{align*}
One can check that the number of edges of the family~$\mathcal F=\mathcal G\cup\mathcal H\cup \mathcal I$ is given by
$$|\mathcal G|+|\mathcal H|+|\mathcal I|= \frac{2^d-d-2}{d}n+1+\frac{n}{d}+\frac{n}{2d} = \frac{2^d-d-\frac{1}{2}}{d}n+1.$$
Moreover, every vertex in $V$ has degree $s=2^{d-1} - d +1$. This implies the first inequality of~\eqref{ineq:constr2}. Taking~$d=c+1$ in Lemma~\ref{lem:locfamiso}~\eqref{it:locnumedges} yields 
$$2^d-d \leq m(d,2^{d-1}-d),$$ 
and thereby the second inequality in~\eqref{ineq:constr2}.
}
\end{constr} 
 
For~$s\leq 16$ (that is~$d\leq 5)$, considering the results from~\cite{Watanabe, Frankl, FranklWatanabe} and Theorem~\ref{thm:secmain} all values of~$m(s)$ are found, except~$m(11)$. 
We recall the conjecture of Frankl and Watanabe~\cite{FranklWatanabe}, which states that Construction~\ref{constr:nonloc} is extremal for~$d=5$. 

\begin{conj}[\cite{FranklWatanabe}]
$m(11)=5.3$
\end{conj}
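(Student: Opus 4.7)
The plan is to adapt the weighting framework from Theorems~\ref{thm:main} and~\ref{thm:secmain} to the boundary case $d=c=5$. The upper bound $m(11)\leq 5.3$ is already provided by Construction~\ref{constr:nonloc}, so by Lemma~\ref{lem:heredissuf} it remains to show that every hereditary $\mathcal{F}\subseteq 2^V$ on an $n$-set $V$ with $\delta(\mathcal{F})\geq 12$ satisfies $|\mathcal{F}|\geq 5.3\,n+1$. Via the double-counting identity $\sum_{v\in V}\sum_{H\in L_v}\frac{1}{|H|+1}=|\mathcal{F}\setminus\{\emptyset\}|$ this amounts to distributing the unit weight of each edge of~$\mathcal{F}$ to its vertices so that the resulting weight $w_{\mathcal{F}}(v)$ averages to at least $5.3$ across~$V$.

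The principal obstacle is that we sit far outside the regime $d\geq 4c$ of Theorem~\ref{thm:main}, and the extremal Construction~\ref{constr:nonloc} is genuinely non $d$-local: clusters $U_i$ of size $5$ are glued in pairs by single size-$2$ edges $\{x_i,x_{i+1}\}$. A direct inspection shows that in that construction the uniform vertex weight inside one cluster averages to exactly $5.3$, with the four inner vertices contributing $5.25$ each and the distinguished vertex $x_i$ contributing $5.5$, of which $1/2$ comes from its crossing edge. Any valid weighting must therefore be tight precisely on this paired configuration, and the argument has to trace how the internal deficits of $U_i$-type clusters are paid off by the crossing $2$-edges they share.

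My plan is to keep the four-class partition $V=\mathfrak{H}\dcup\bigcup_{v\in\mathcal{L}_1}V_v\dcup\bigcup_{v\in\mathcal{L}_2}V_v\dcup\widebar{\mathcal{L}}$ used in Section~\ref{sec:ideaofproof}, but to tailor the notion of \emph{light vertex} so that $\mathcal{L}$ captures precisely the $U_i$-type clusters appearing in Construction~\ref{constr:nonloc}. The first technical step would be a sharpening of Lemma~\ref{lem:weight}~(\ref{it:surplusspec}) extended to $c=5$, separating the minimiser $\mathcal{R}(12)$ from the $U_i$-type link according to which size-$2$ subsets appear; this should give a sharp surplus of at least $\frac{1}{d}=\frac{1}{5}$ for any link that is neither $\mathcal{R}(12)$ nor the $U_i$-type one. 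Such a surplus handles vertices in $\mathfrak{H}$ and $\widebar{\mathcal{L}}$ as in~(\ref{eq:weightestimateheavy1}) and~(\ref{eq:weightestimatetype31}); for $\mathcal{L}_1$-clusters, Lemma~\ref{lem:locfamiso}~(\ref{it:locdeg}) applied with $c-1=4$ forces $|\mathcal{F}[V_v]|\geq 28$, so the internal contribution to the uniform weight is at least $27$, already exceeding the per-cluster target $26.5$.

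The hard part will be the bookkeeping for $\mathcal{L}_2$-clusters, where the internal uniform weight can fall slightly short of $26.5$ and the deficit must be made up by crossing edges. Concretely, I would enumerate the possibilities for $|2^{V_v}\setminus\mathcal{F}|$ in a $U_i$-type cluster under $\delta(\mathcal{F})\geq 12$, extract via Lemma~\ref{lem:locfamiso}~(\ref{it:locnumedges}) a guaranteed set of low-internal-degree vertices that must carry crossing edges, and distribute the unit weight of each crossing $2$-edge as $(\tfrac{1}{2},\tfrac{1}{2})$ when both endpoints need the full share and asymmetrically $(\alpha,1-\alpha)$ otherwise. The crux is a Hall-type matching argument on the auxiliary graph whose vertices are deficit-bearing clusters and whose edges are the crossing $2$-edges between them: one must show that every deficit can be covered without reusing the same crossing twice, which is the qualitative feature that makes Construction~\ref{constr:nonloc} tight. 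I anticipate that ruling out non $d$-local near-extremal families differing from Construction~\ref{constr:nonloc} by only a bounded number of edges will require a short stability analysis at the end.
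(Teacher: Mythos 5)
This statement is an open conjecture: the paper does not prove it, and neither do you. What you have written is a research plan, not a proof, and the gap is precisely the entire lower bound $m(11)\geq 5.3$. Your reduction via Lemma~\ref{lem:heredissuf}, your identification of the upper bound with Construction~\ref{constr:nonloc}, and your tightness computation for that construction (inner vertices of uniform weight $5.25$, the vertices $x_i$ of weight $5.5$, cluster average exactly $5.3$) are all correct, but every step that would actually establish the lower bound is left as an intention: the sharpened version of Lemma~\ref{lem:weight}~(\ref{it:surplusspec}) for $c=5$ with the claimed dichotomy between $\mathcal{R}(12)$ and the cluster-type link is only asserted (``should give''), the treatment of $\mathcal{L}_2$-clusters is described as ``bookkeeping'' to be done, the Hall-type matching on crossing $2$-edges is named but not constructed, and the concluding stability analysis is merely anticipated. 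None of these are routine adaptations of the paper's arguments, because the mechanism of Theorems~\ref{thm:main} and~\ref{thm:secmain} genuinely breaks at $d=c$.

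The structural reason is worth making explicit, since it shows why the missing matching argument is the whole problem rather than a final detail. In the proofs for $d\geq 4c$ and for $c\in\{3,4\}$, a crossing edge can donate \emph{more} than its uniform share to the outside vertex (or, in Theorem~\ref{thm:main}, the inside renounces part of its share), because the inside of a cluster has weight to spare; the inequalities such as~\eqref{eq:weightestimatetype20} close with room left over. In the conjectured extremal configuration for $s=11$, by your own computation there is zero slack: each crossing edge $\{x_i,x_{i+1}\}$ joins two clusters both of which need exactly half of its weight, so no redistribution scheme in the spirit of the paper's $\bigl(\tfrac12-\tfrac{c-1}{d-c}\bigr)$-transfers or the $\tfrac19/\tfrac1{18}$-transfers is available, and any vertex outside all clusters whose deficit must be covered by a crossing edge competes with a cluster that cannot afford to give anything up. Showing that the deficits can nevertheless always be covered without reusing a crossing edge --- and handling families that are close to, but not equal to, Construction~\ref{constr:nonloc} --- is exactly the open content of the Frankl--Watanabe conjecture. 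As it stands, your proposal proves only $m(11)\leq 5.3$, which the paper already provides.
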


A complementary approach than the one taken in this paper could be as follows.

\begin{problem}\label{prob:+c} Given a positive integer~$d$ and an integer~$c \in [0, 2^{d-1})$, find the value of~$m(2^{d-1} + c)$.
\end{problem}
 
Naturally, for~$c\geq 2^{d-1}-\frac{d}{4}$ Problem~\ref{prob:+c} is solved by Theorem~\ref{thm:main}. For~$c\leq 2^{d-2}$, the only general result is given in~\cite{FranklWatanabe}, where it is shown that~$m(2^{d-1})= \frac{2^d-1}{d}+\frac{1}{2}$.
For other values of~$c$ Problem~\ref{prob:+c} is still open.

Observe that Theorems~\ref{thm:main} and~\ref{thm:secmain} and the results presented in~\cite{Frankl, Watanabe, FranklWatanabe} concern cases in which $s$ is close to~$2^d$ for some value of $d$. 
In general, there are still large intervals between powers of~$2$ for which the only bounds on~$m(s)$ that are known are those that follow directly from the previously mentioned results.
Finding a solution for Problem~\ref{prob:dlocal} might shed light on this problem by possibly providing a first understanding of the structural behaviour in those intervals.

\begin{bibdiv}
\begin{biblist}

\bib{Bondy}{article}{
  author={Bondy, John A.},
  title={Induced subsets},
  journal={Journal of Combinatorial Theory, Series B},
  volume={12},
  number={2},
  pages={201--202},
  year={1972},
  publisher={Academic Press}
}

\bib{Frankl}{article}{
  author={Frankl, Peter},
  title={On the trace of finite sets},
  journal={Journal of Combinatorial Theory, Series A},
  volume={34},
  number={1},
  pages={41--45},
  year={1983},
  publisher={Elsevier}
}

\bib{FranklTokushige}{book}{
  author={Frankl, Peter},
  author={Tokushige, Norihide},    
  title={Extremal problems for finite sets},
  volume={86},
  year={2018},
  publisher={American Mathematical Soc.}
}

\bib{FranklWatanabe}{article}{
  title={Some best possible bounds concerning the traces of finite sets},
  author={Frankl, Peter},
  author={Watanabe, Mika},
  journal={Graphs and Combinatorics},
  volume={10},
  number={2-4},
  pages={283--292},
  year={1994},
  publisher={Springer}
}

\bib{FurediPach}{article}{
    author={F\"uredi, Zolt\'an},
    author={Pach, J\'anos},
    title={Traces of finite sets: extremal problems and geometric applications},
    journal={Extremal problems for finite sets},
    volume={3},
    pages={255--282},
    year={1991},
    publisher={Citeseer}
}

\bib{Bollobas}{book}{
  author={Lov{\'a}sz, L{\'a}szl{\'o}},
  title={Combinatorial Problems and Exercises. 1979},
  publisher={North-Holland, Amsterdam}
}

\bib{Katona}{article}{
  author={Katona, G.O.H.},
  title={Optimization for order ideals under a weight assignment},
  journal={Probl\`emes Comb, et Th\'eorie des Graphes},
  pages={257--258},
  year={1978},
  publisher={Colloque Int., CNRS, Paris}
}

\bib{Sauer}{article}{
  author={Sauer, Norbert},
  title={On the density of families of sets},
  journal={Journal of Combinatorial Theory, Series A},
  volume={13},
  number={1},
  pages={145--147},
  year={1972},
  publisher={Elsevier}
}

\bib{Shelah}{article}{
  author={Shelah, Saharon},
  title={A combinatorial problem; stability and order for models and theories in infinitary languages},
  journal={Pacific Journal of Mathematics},
  volume={41},
  number={1},
  pages={247--261},
  year={1972},
  publisher={Mathematical Sciences Publishers}
}

\bib{CervonenkisVapnik}{incollection}{
  author={Vapnik, Vladimir N.},
  author={Chervonenkis, A. Y.},
  title={On the uniform convergence of relative frequencies of events to their probabilities},
  booktitle={Measures of complexity},
  pages={11--30},
  year={2015},
  publisher={Springer}
}

\bib{Watanabe}{article}{
  author={Watanabe, Mika},
  title={Some best possible bounds concerning the traces of finite sets II},
  journal={Graphs and Combinatorics},
  volume={11},
  number={3},
  pages={293--303},
  year={1995},
  publisher={Springer}
}

\bib{Watanabe2}{article}{
  author={Watanabe, Mika}
  title={Arrow relations on families of finite sets},
  journal={Discrete mathematics},
  volume={94},
  number={1},
  pages={53--64},
  year={1991},
  publisher={Elsevier}
}

\end{biblist}
\end{bibdiv}

\end{document}